\newcommand{\derive}[2] {\frac{\partial {#1} }{\partial {#2}}}
\newcommand{\derd}[2] {\frac{\vd #1 }{\vd #2}}
\newcommand{\R}[0]{\mathbb{R}}
\newcommand{\N}{\mathbb{N}}
\newcommand{\of}[1]{\left (#1 \right)}
\newcommand{\abs}[1]{\left | #1\right| }
\newcommand{\mabs}[1] {\,| #1 |}
\newcommand{\norm}[1]{\left \|#1  \right \|}
\newcommand{\CC}{\mathcal{C}}
\newcommand{\set}[2]{ \{ #1 ~|~ #2\}}
\newcommand{\sset}[1]{ \{ #1\}}
\newcommand{\rest}[0]{|}
\newcommand{\Leb}{\mathrm{L}}
\DeclareMathOperator{\ran}{ran}
\newcommand{\vd}{ \mathrm{d}}
\renewcommand{\epsilon}{\varepsilon}
\renewcommand{\phi}{\varphi}
\newcommand{\skp}[1]{\left \langle {#1} \right \rangle}
\DeclareMathOperator{\bigO}{\mathcal{O}}
\theoremstyle{definition}
\newtheorem{definition}{Definition}
\theoremstyle{theorem}
\newtheorem{lemma}{Lemma}
\theoremstyle{remark}
\newtheorem{example}{Example}
\newtheorem{numexp}{Numerical Experiment}
\theoremstyle{theorem}
\newtheorem{theorem}{Theorem}
\newtheorem{remark}{Remark}
\newtheorem{corollary}{corollary}
\title{Using the Dafermos Entropy Rate Criterion in Numerical Schemes}
\author{Simon-Christian Klein}
\begin{document}
\maketitle
\begin{abstract}
	The following work concerns the construction of an entropy dissipative finite volume solver based on the convex combination of an entropy conservative and an entropy dissipative flux. We aim to construct a semidiscrete scheme that is entropy stable in the sense of the entropy criterion of Dafermos as well as in the classical sense entropy dissipative. The proposed semidiscrete scheme shows nice properties like $2p$ order accuracy in smooth regions as well as a non-oscillatory behavior around shocks. 
\end{abstract}

\section{Introduction}
\label{intro}
The robustness of numerical methods for hyperbolic conservation laws of the form 
\begin{equation}
\derive{u(x, t)}{t} + \derive{f\circ u (x, t)}{x} = 0 \quad \text{for} \quad u(x, t): \R \times \R \to \R^m \quad \text{with} \quad f: \R^m \to \R^m
\label{eq:hpde}
\end{equation}
is greatly enhanced by numerical methods that do not only approximate \eqref{eq:hpde} but also satisfy entropy inequalities
\begin{equation}
\derive{U \circ u}{t} + \derive{F \circ u}{x} \leq 0.
\label{eq:eie}
\end{equation}
These are used to select one weak solution out of many possible weak solutions. One could further assume that the error for fixed grid size  could be reduced by adhering to entropy inequalities. A scheme has to satisfy \eqref{eq:eie} in a discrete sense
\[
    \frac{U^{n+1}_k - U^{n}_k}{\Delta t} + \frac{F^n_{k+ \frac 1 2} - F^n_{k-\frac 1 2}}{\Delta x} \leq 0
\]
as proposed in \cite{Lax71, Tadmor84I, Tadmor84II} for all or at least one entropy pair $(U, F)$. If solutions of a scheme satisfy all of these inequalities it is called an entropy stable scheme and entropy dissipative if only one entropy inequality is satisfied. Examples of schemes constructed with the aim of being entropy dissipative are for example given in \cite{EntropyViscosity, CvXLimiting, EntropyLimiter, RDI, RDII, DG}. While the objective of this work is also centered around entropy dissipative schemes the motivation stems from an alternative entropy criterion by Dafermos. A second distinction lies in the fact that most of the aforementioned authors construct generalizations of finite element methods while this work is based on classical finite volume methods. We will first look at some numerical artifacts that can still occur with entropy dissipative schemes. Afterwards, a scheme will be constructed that is entropy dissipative and at least approximately satisfies the entropy condition of Dafermos \cite{Dafermos72} and some numerical tests using this scheme will be carried out. Dafermos defined a different entropy criterion using the total entropy in the domain
\[
E_u(t) = \int U \circ u(x, t) \vd x.
\]
A Dafermos entropy solution $u$ is a weak solution that satisfies 
\[
 \forall t > 0: \quad \derd{E_u(t)}{t} \leq \derd{E_{\tilde u}(t)}{t}
\]
compared to all other weak solutions $\tilde u$ of the conservation law \eqref{eq:hpde}. In essence entropy of the solution decreases faster than the entropy of all other solutions.

\section{Comparing schemes by their entropy dissipation}
\label{sec:1}

 Stable high order schemes are often constructed by the addition of suitable dissipation to an at least entropy conservative base scheme, e.g. \cite{AddDiss}. The equation approximated by the resulting scheme is typically of the form
\[
	\derive{u}{t} + \derive{f(u)}{x} = \epsilon \derive{^{2s} u}{x^{2s}}.
\]
The amount of dissipation $\epsilon$ has a strong influence on the resulting errors. To much dissipation results in a simulation of a diffusion (heat) equation while too small amounts of dissipation are responsible for oscillations and can lead to instabilities and order losses. It arises the question of why schemes with low entropy dissipation show bad behavior even if they are formally of high order and entropy dissipative, as they fulfil the entropy inequality for at least one entropy. We would like to shed some light on the connection between the correct amount of entropy dissipation defined by the Dafermos criterion using the following numerical experiments. It should be noted that the Dafermos criterion was designed for  solutions to conservation laws and not their numerical approximations; this means we will look at some numerical solutions and make some assumptions about their limit solutions and their behavior.

\begin{numexp}[Comparing two schemes by the Dafermos criterion]
	A simulation of the Burgers equation \[
		\derive{u}{t} + \frac 1 2 \derive{u^2}{x} = 0
	\] with $u_0(x) = \sin(\pi x)$ was carried out on a periodic domain $ \Omega = [0, 2)$ for $t \in [0, 2]$. The entropy conservative flux of order 4 from \cite{Tadmor87, LMR2002} was used with a dissipation operator due to \cite{Fornberg1998} and $\epsilon = 0.5$ as dissipation coefficient. The numerical solution was compared to a solution calculated by a Godunov scheme. The solution and graphs of the complete entropy in the domain for the quadratic entropy can be seen in figure \ref{fig:godvssbpexp}. Several times a new simulation was started using the solution of the entropy conservative fluxes in conjunction with dissipation as a starting point and the Godunov method as solver. The corresponding total entropy was also plotted in the total entropy diagrams. As we would like to be sure that our conclusions do not depend on the number of points in the domain the simulation was carried out once more with 3000 instead of 100 cells.
	\begin{figure}
		\begin{subfigure}[c]{0.49\textwidth}
			\includegraphics[width=\textwidth]{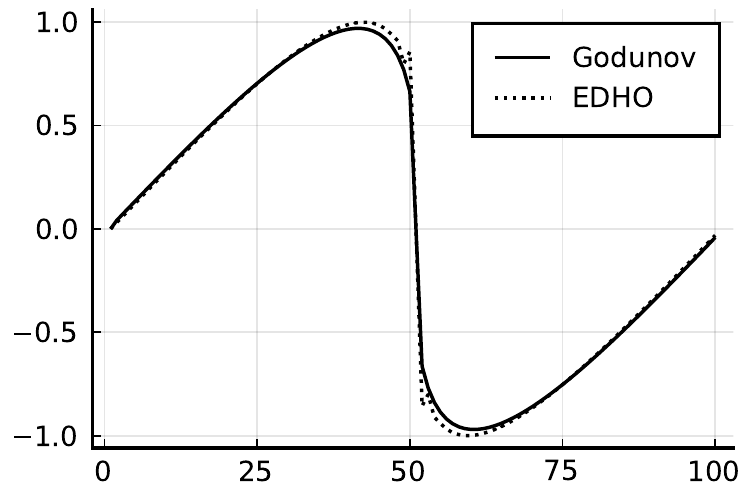}
			\subcaption{Solution at $t = 0.34$ using $N = 100$ points}
		\end{subfigure}
		\hfill
		\begin{subfigure}[c]{0.49\textwidth}
			\includegraphics[width=\textwidth]{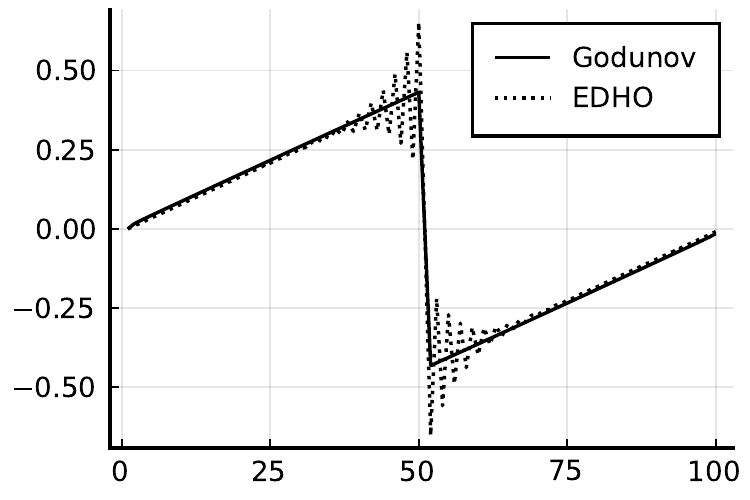}
			\subcaption{Solution at $t = 1.99$ using $N = 100$ points}
		\end{subfigure}
		
		\begin{subfigure}[c]{0.49\textwidth}
			\includegraphics[width=\textwidth]{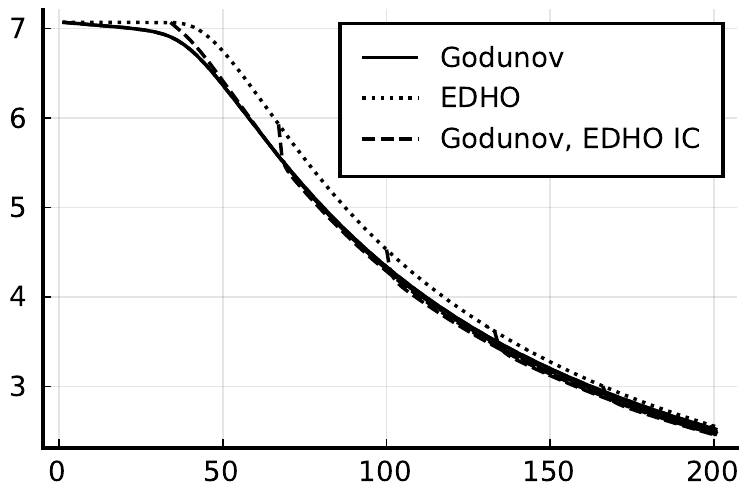}
			\subcaption{Total entropies over time using $N =100$ points}
		\end{subfigure}
		\hfill
		\begin{subfigure}[c]{0.49\textwidth}
			\includegraphics[width=\textwidth]{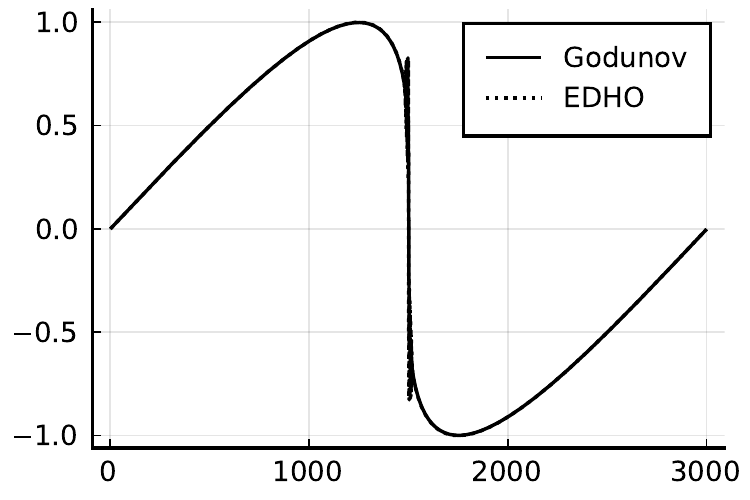}
			\subcaption{Solution at $t =0.33$ using $N=3000$ points}
		\end{subfigure}
		
		\begin{subfigure}[c]{0.49\textwidth}
			\includegraphics[width=\textwidth]{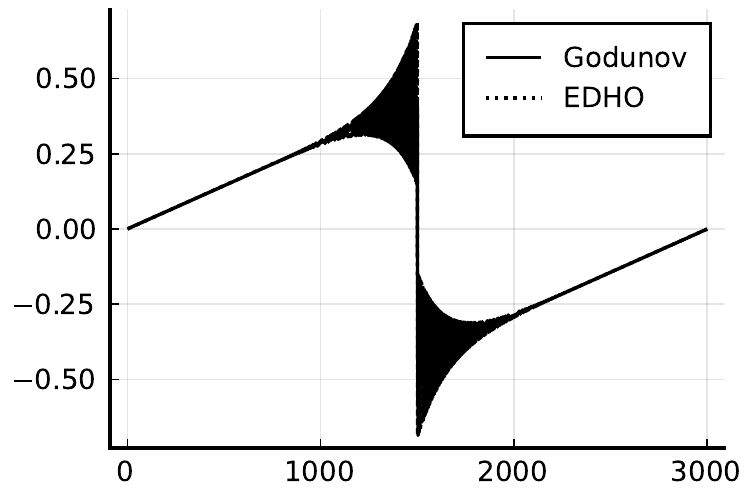}
			\subcaption{Solution at $t = 2.0$ using $N= 3000$ points}
		\end{subfigure}
		\hfill
		\begin{subfigure}[c]{0.49\textwidth}
			\includegraphics[width=\textwidth]{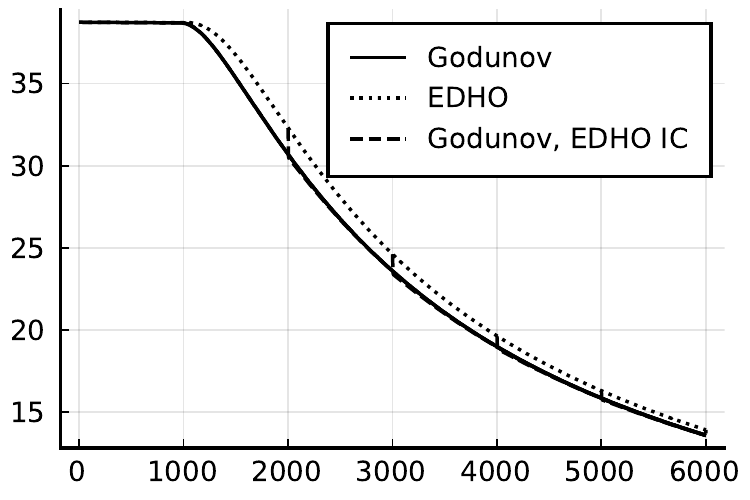}
			\subcaption{Total entropies over time using $N=3000$ points}
		\end{subfigure}
		\caption{Solution to $u_0(x) = \sin(\pi x)$ for the Burgers equation with $N = 100$ cells in the first 3 Graphs and $N=3000$ cells in the last 3 Graphs. A simulation with the Godunov method was started with the solution of the entropy dissipative high order (EDHO) scheme as a starting point at different times.The Godunov method is the basic Godunov scheme with the exact Riemann solver for the Burgers equation and without any reconstruction. Time integration was carried out using a CFL number of $\lambda = 0.5$ and the SSPRK104 scheme.The high order scheme is composed of an entropy stable flux and a dissipation operator. The fourth order entropy conservative flux constructed out of Tadmors entropy conservative flux \cite{Tadmor87} for the Burgers equation and the linear combination developed by LeFloch, Mercier and Rhode \cite{LMR2002} is used. A periodic fourth order dissipation operator with the coefficients given in \cite{Fornberg1998} was used as a dissipation operator with strength $\epsilon = 0.5$. Time integration was, as in the case before, done using the SSPRK104 method.}
		\label{fig:godvssbpexp}
	\end{figure}
	We can clearly see that the entropy dissipative method produces bad results, because oscillations appear around the shock. We can also see that the Godunov scheme dissipates more entropy than the other scheme. The simulations which where carried out by the Godunov method with the solution of the high order method at different times as a starting point are especially interesting. These show a strong reduction of the total entropy until the total entropy of the solution calculated by the Godunov method from the beginning is reached. The Dafermos entropy criterion is only partially applicable in this case as the solutions are approximate solutions. It states in this case that the solution of the high order solver is not the entropy solution, although the solver is technically entropy dissipative, because the negative derivative of the total entropy can even be more negative. It should be noted that the Godunov method on the other hand dissipates entropy even for smooth solutions. This opposes the known theory of hyperbolic conservation laws, as smooth solutions satisfy an entropy equality referred to as an additional conservation law \cite{Dafermos72, Lax71}. The Godunov method satisfies the entropy equality only approximately as the entropy dissipation is small compared to the entropy dissipation after the onset of the shock, but not zero. The Godunov method is still the best possible three point first order method as it is the method with the least possible dissipation that converges to the entropy solution \cite{Tadmor84I, Tadmor84II}.
\end{numexp}

\begin{numexp}[Per cell dissipation of the Godunov method]
	As we saw in the last example the Godunov method leads to a significantly higher total entropy reduction than our high order method, which leads to the question of where this dissipation occurs. 
	\begin{figure}
		\begin{subfigure}{0.49\textwidth}
		\includegraphics[width=\textwidth]{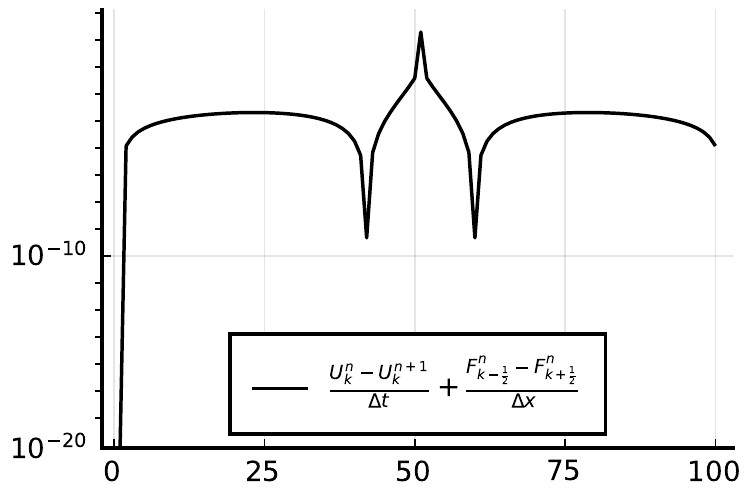}
		\subcaption{Entropy equality violation at $t = 0.34$}
	\end{subfigure}
	\hfill
	\begin{subfigure}{0.49\textwidth}
		\includegraphics[width=\textwidth]{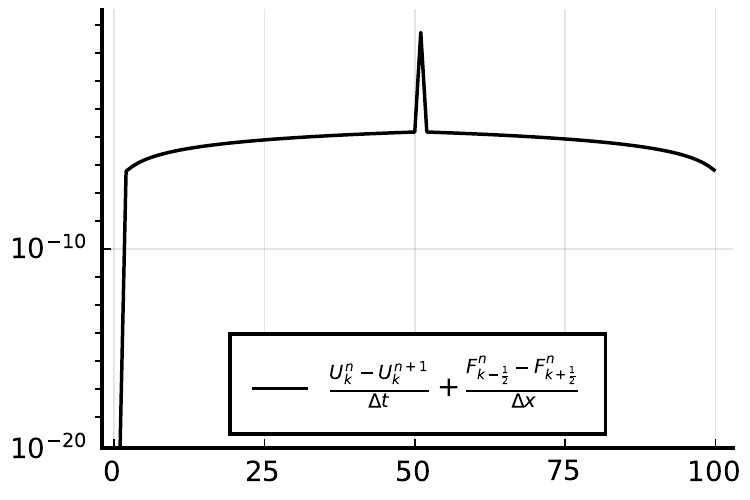}
		\subcaption{Entropy equality violation at $t = 1.99$}
	\end{subfigure}
		\caption{Per cell entropy inequality for the Godunov scheme. The same Godunov method was used as in figure \ref{fig:godvssbpexp} with $N=100$ cells.}
		\label{fig:GodPerCellEntro}
	\end{figure}
	This is why the violation of the entropy equation was plotted for the aforementioned numerical experiment for the Godunov method in figure \ref{fig:GodPerCellEntro}. We can see that a small amount of entropy dissipation occurs during the simulation of a smooth solution while a much bigger amount of entropy dissipation occurs centered around a shock, if present, in compliance with the entropy inequality for shocks. This knowledge was already put to work in \cite{TW2012} using edge sensors. 
	
	\end{numexp}

The last two numerical experiments lead to a new design philosophy for numerical schemes. A good numerical scheme should not only be entropy dissipative in the sense of the entropy inequality. It should also dissipate the correct amount of entropy. This can be governed by the entropy equality for smooth areas, the entropy inequality for shocks and the Dafermos entropy criterion. The Godunov method violates this philosophy by dissipating entropy in smooth areas, while the aforementioned high order method dissipates less entropy than needed and possible around shocks, which violates the Dafermos criterion. It should be noted at the same time that schemes can also dissipate too much entropy in the vicinity of a shock or a maximum. Our proposed scheme will be built out of the following components.
\begin{itemize}
	\item Let the scheme decide if the entropy equality or the entropy inequality holds in an area - this is equivalent to the presence of a shock.
	\item Use an entropy conservative flux if the entropy equality holds.
	\item Dissipate entropy with correct rate in the other case by the use of a dissipative first order flux.

\end{itemize}

Deciding which amount of entropy dissipation is the correct amount is a non-trivial sub-problem. It is not wise to aim for unconstrained maximized entropy dissipation in a numerical method as given by the Dafermos criterion. The reason for this is that the conservation law works as a constraint for the variational formulation of entropy dissipation. Numerical solvers can violate this constraint to some extent and dissipate even more entropy at the cost of higher approximation errors, as even more dissipation leads to larger approximation errors. This is why the highest amount of entropy loss that does not sacrifice low approximation errors is needed.

	It is difficult to find a definition for a suitable amount of entropy loss that does not sacrifice low approximation errors. Godunov's method dissipates the least amount of entropy possible for single conservation laws of all E-fluxes \cite{Tadmor84I, Tadmor84II}, and is thereby a natural candidate. Especially as a high order approximation makes no sense for a region of discontinuity.
\begin{remark}
	One could ask why the Godunov and not an even more dissipative flux like the local Lax-Friedrichs flux should be selected. We are interested in the highest amount of dissipation that does not lower the accuracy, in the sense of the error between numerical approximation and exact (entropy) solution for fixed grid size. While the Lax-Friedrichs method has the same formal order of accuracy the method is less accurate for a fixed grid than the Godunov method and therefore violates our additional constraint. Another perspective can be that a higher entropy dissipation rate than the Godunov method has to be also higher than the entropy rate of the exact solution as the Godunov method uses averages of exact solutions. One could conjecture that such a high dissipation is not possible for any exact weak solution. The Lax-Friedrichs method will be still used in some of the following numerical tests to avoid solving Riemann problems for the Euler equations as the error between exact solution and LF method vanishes for growing grid sizes.\end{remark}

The following chapter is devoted to the construction of the aforementioned solver that tries to satisfy these requirements and uses the Godunov flux as a guide for the correct amount of entropy to dissipate. For simplicity this is done by using an entropy stable first order flux in this case for the dissipation and Tadmor's high order flux in entropy conservative areas of the domain. This scheme thereby should be a numerical scheme that at least approximately satisfies the Dafermos entropy criterion.

\section{The Best of Both Worlds}
\label{sec:2}

We will use entropy conservative fluxes as pioneered in \cite{Tadmor87}. A flux $f$ will be termed entropy conservative if it satisfies a semidiscrete entropy equality
\[
	\derd{U(u_k(t))}{t} = \frac{F(u_{k-p-1}, \dots, u_{k+p}) - F(u_{k-p}, \dots, u_{k+p+1})}{\Delta x}.
\]

\begin{definition}[Convex combination flux]
	We define a new numerical flux by
	\[
		f^{GT}_\alpha \of {u_{i}, u_{i+1}} =  \alpha f^G \of {u_i, u_{i+1}} + (1-\alpha) f^T \of  {u_{i}, u_{i+1}}
	\]
	where $\alpha \in [0, 1] $ is a parameter controlling a convex combination between the Godunov flux presented in \cite{G59, Osher84} and the entropy conservative flux given in \cite{Tadmor87}. The value of \[\alpha = \alpha (u_{i-p+1}, \dots, u_{i+p})\] will in general depend on $u_i$ and therefore the properties of the flux will depend on the selected function $\alpha(u_{i-p+1}, \dots, u_{i+p})$.  
\end{definition}
	It should be clear that this construction does not depend on the use of the Godunov flux. In fact any other numerical flux function could be used, and we will refer to a flux constructed this way using the Lax-Friedrichs scheme as the LFT-Flux and to a flux constructed using the Godunov scheme as the GT flux. Several other entropy conservative fluxes \cite{IsmailRoe2009, ranocha2018thesis} have been constructed for some conservation laws and these can be also substituted for the basic Tadmor entropy conservative flux. 

\begin{lemma}
	The GT-Flux is a consistent and local Lipschitz continuous numerical flux.
	\begin{proof}
		Consistency can be proved by direct insertion.
		\[
			f_\alpha(u, u) =  \alpha f^G \of {u, u} + (1-\alpha) f^T \of  {u, u} = \alpha f(u) + (1-\alpha) f(u) = f(u)
		\]
		We will interpret the arguments of the numerical fluxes as tuples $a = (u_{i}, u_{i+1})$ during the rest of the proof. 
		The Godunov and Tadmor fluxes are Lipschitz continuous with the constants $L_G$ and $L_T$,
		\begin{align*}
			\abs{f_G(a) - f_g(b)} &\leq L_G \norm{a-b} \\
			\abs{f_T(a) - f_T(b)} &\leq L_T \norm{a-b}.
			\end{align*}
		We can conclude using the triangle inequality that the fluxes are also bounded for any bounded subset $U \subset \R^{2p\times m}$
		\begin{align*}
			\forall a \in U,\, \forall I \in \sset{G,T} :\quad \abs{f_I(a)} &\leq \abs{f_I(a) - f_I(a_0)} + \abs{f_I(a_0)} \\ 
			&\leq \abs{f_I(a_0)} + L_I \norm{a-a_0} \\ 
			&\leq \abs{f_I(a_0)} + L_IM_{a_0} = M_I,
		\end{align*}
		where $M_{a_0} > 0$ is any bound that satisfies $\forall a \in U:  \norm{a- a_0} \leq M_{a_0}$ and $a_0 \in U$ is an arbitrary point. Another calculation shows that \[f_\alpha(a) = F(\alpha, a) : [0, 1] \times \R^{2p\times m} \to\R^m\] is a local Lipschitz continuous function
	\begin{align*}
\abs{f_\alpha(a) - f_\beta(b)}	=&\abs{\alpha f_G(a) + (1-\alpha)f_T(a) - \beta f_G(b) - (1-\beta) f_T(b)} \\
= &\abs{\alpha f_G(a) - \beta f_G(b) + (1-\alpha)f_T(a) - (1-\beta) f_T(b)} \\
= &\mabs{ \alpha f_G(a) - \beta f_G(a) + \beta f_G(a) -  \beta f_G(b)  \\
	& +(1-\alpha) f_T(a) - (1-\beta) f_T(a) + (1-\beta)f_T(a) - (1-\beta) f_T(b) } \\
\leq& \abs {\alpha - \beta} \abs{f_G(a)} + \abs{\beta} \abs{f_G(a) - f_G(b)} \\
	&+\abs{\beta - \alpha }\abs{f_T(a)} + \abs{1- \beta}\abs{f_T(a) - f_T(b)}\\
\leq& \abs{\alpha-\beta}(M_G + M_T) + \norm{a-b}(L_G + L_T).
	\end{align*}
		\qed
		\end{proof} 
	\end{lemma}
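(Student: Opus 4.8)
The plan is to handle the two claims separately, beginning with the easy one. For \emph{consistency} I would simply substitute equal arguments: since both the Godunov flux $f^G$ and Tadmor's flux $f^T$ are themselves consistent numerical fluxes for \eqref{eq:hpde}, we have $f^G(u,u) = f^T(u,u) = f(u)$, and hence $f^{GT}_\alpha(u,u) = \alpha f(u) + (1-\alpha) f(u) = f(u)$ regardless of the value of $\alpha$. No further work is needed here.

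For \emph{local Lipschitz continuity} the effort lies in controlling the interplay between the weight $\alpha$ and the state arguments, which I would collect into a single tuple $a = (u_i, u_{i+1})$. First I would record that $f^G$ and $f^T$, being Lipschitz with constants $L_G$ and $L_T$, are bounded on every bounded set $U$: fixing a reference point $a_0 \in U$ and a radius $M_{a_0}$ with $\norm{a-a_0} \leq M_{a_0}$ for all $a \in U$, the estimate $\abs{f_I(a)} \leq \abs{f_I(a_0)} + L_I \norm{a-a_0} \leq \abs{f_I(a_0)} + L_I M_{a_0} =: M_I$ furnishes a uniform bound for $I \in \sset{G,T}$. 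These bounds are exactly what is needed to absorb the dependence on the weight.

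The core step is then to estimate $\abs{f_\alpha(a) - f_\beta(b)}$ for the joint map $(\alpha, a) \mapsto f_\alpha(a)$ on $[0,1] \times U$. The key trick is an add-and-subtract decomposition: inserting the terms $\pm\,\beta f^G(a)$ and $\pm\,(1-\beta) f^T(a)$ splits the difference into pieces of the form $(\alpha - \beta) f_I(a)$, controlled by the boundedness above, and pieces of the form $f_I(a) - f_I(b)$ carrying coefficients in $[0,1]$, controlled by the Lipschitz constants. Collecting everything yields the bound $\abs{f_\alpha(a) - f_\beta(b)} \leq \abs{\alpha - \beta}\of{M_G + M_T} + \norm{a-b}\of{L_G + L_T}$, so the flux is locally Lipschitz jointly in $(\alpha, a)$.

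I expect the main obstacle to be conceptual rather than computational: one must resist applying the Lipschitz estimates of the base fluxes directly, because the two evaluations carry different weights $\alpha$ and $\beta$, and the add-and-subtract step is precisely what realigns the coefficients so that each base-flux difference is weighted consistently. A final remark worth making is that this establishes Lipschitz continuity of the flux viewed as a function of the weight and the state; to conclude Lipschitz continuity of the genuine GT-flux, in which $\alpha = \alpha(u_{i-p+1}, \dots, u_{i+p})$ is itself determined by the stencil, it suffices that the chosen weight function $\alpha(\cdot)$ be Lipschitz, so that its composition with the jointly Lipschitz map remains Lipschitz.
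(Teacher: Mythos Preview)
Your proposal is correct and follows essentially the same route as the paper: consistency by direct substitution, boundedness of $f^G$ and $f^T$ on bounded sets via their Lipschitz constants, and the same add-and-subtract decomposition yielding the joint Lipschitz bound $\abs{\alpha-\beta}(M_G+M_T)+\norm{a-b}(L_G+L_T)$. Your closing remark that Lipschitz continuity of the full flux then reduces to Lipschitz continuity of the weight function $\alpha(\cdot)$ is exactly the observation the paper makes immediately after the lemma.
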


Using the previous lemma proving that $f_{\alpha\of{u_{i-p+1}, \dots, u_{i+p}}} \of{u_{i}, u_{i+1}}$ is a local Lipschitz continuous flux boils down to proving that $\alpha: \R^{2p\times m} \to [0, 1]$ is local Lipschitz continuous.

We will now prove that this new flux satisfies a semidiscrete entropy inequality at least if there is a cell boundary where $\alpha_{k+\frac 1 2} \neq 0$ holds. The proof is based on cell subdivision, averaging and the convexity of the entropy as already used in \cite{Tadmor84II}.
\begin{theorem}
	The GT flux satisfies the semidiscrete cell entropy inequality
		\[
			\derd{U\circ u_k}{t} \leq \frac{ F^{GT}_{\alpha_{k-\frac 1 2}}(u_{k-1}, u_k) - F^{GT}_{\alpha_{k+\frac 1 2}}(u_{k}, u_{k+1})}{\Delta x}
		\]
		with the numerical entropy Flux
		\[
			F^{GT}_\alpha(u_l, u_r) = \alpha F^G(u_l, u_r) + (1-\alpha) F^T(u_l, u_r)
		\]
		where $F^G(u_l, u_r) = F(u_R(0, u_l, u_r))$ and \[
		\begin{aligned}
		&F^T(u_l, u_r)\\ 
		& =\frac{\langle \derive{U}{u}(u_l) + \derive{U}{u}(u_r),f^T(u_l,u_r) \rangle  + F(u_l) + F(u_r) - \langle \derive{U}{u}(u_l), f(u_l) \rangle  - \langle \derive{U}{u}(u_r), f(u_r) \rangle }{2}
		\end{aligned}
		\] are the respective entropy fluxes of the Godunov\cite{Tadmor84II} and Tadmor fluxes\cite{Tadmor87}.
	\begin{proof}
		We begin our proof by deriving a semidiscrete cell entropy inequality from the discrete cell entropy inequality for the Godunov flux by going over to the limit $\Delta t \to 0$
		\begin{equation*}
		\begin{aligned}
		0 \geq 
		\lim_{\Delta t \to 0} \frac{U(u^{n+1}_{k}) - U(u^n_k)}{\Delta t} - \frac{ F^{G}(u^n_{k-1}, u^n_k) - F^{G}(u^n_k, u^n_{k+1})}{\Delta x}\\ 
		= 
		\derd{U \circ u}{t}  - \frac{ F^{G}(u_{k-1}, u_k) - F^{G}(u_k, u_{k+1})}{\Delta x}\\
		= 
		\skp{ v_k, \derd{u_k}{t} } - \frac{ F^{G}(u_{k-1}, u_k) - F^{G}(u_k, u_{k+1})}{\Delta x} \\
		= \skp{ v_k, \frac{f^{G}(u_{k-1}, u_k) - f^{G}(u_k, u_{k+1})}{\Delta x} } - \frac{F^{G}(u_{k-1}, u_{k}) - F^{G}(u_k, u_{k+1})}{\Delta x}
		\end{aligned}
		\end{equation*}
		The same holds in the sense of an equality also for the product of the entropy variable with the Tadmor flux. We first look at the special case $\alpha_{k-\frac 1 2} = \alpha= \alpha_{k + \frac 1 2}$ and use the entropy variable $v_k = \derive {U \circ u}{u}\rest_{u_k}$ to find
		\[
			\begin{aligned}
			&\derd{U\circ u_k}{t}\\ =& \skp{v_k, \derd{u}{t}}\\
			 =& \skp{v_k, \frac{\alpha f^G(u_{k-1}, u_{k}) + (1-\alpha) f^T(u_{k-1}, u_k) - \alpha f^T(u_{k}, u_{k+1}) - (1-\alpha)f^G(u_k, u_{k+1})}{\Delta x}} \\
			=& \alpha\skp{ v_k, \frac{f^G(u_{k-1}, u_k) - f^G(u_k, u_{k+1})}{\Delta x}} + (1-\alpha) \skp{v_k, \frac{f^T(u_{k-1}, u_k) - f^T(u_k, u_{k+1})}{\Delta x}} \\
			\leq& \alpha \frac{F^G(u_{k-1}, u_k) - F^G(u_k, u_{k+1})}{\Delta x} + (1-\alpha) \frac{F^T(u_{k-1}, u_k) - F^T(u_k, u_{k+1})}{\Delta x}\\
			 =& \frac{F^{GT}_{\alpha}(u_{k-1}, u_k) - F^{GT}_{\alpha}(u_{k}, u_{k+1})}{\Delta x}.
			\end{aligned}
		\]
		Furthermore, we now consider the general case $\alpha_{k-\frac 1 2} \neq \alpha_{k +\frac 1 2}$ under usage of the first case.
		 The derivative of the average $u_k$ can be rewritten as the average of two schemes for the averages $u_{k-\frac 1 4}$ and $u_{k+\frac 1 4}$ 
		 \[\begin{aligned}
		 	\derd{u_{k - \frac 1 4}}{t} &= \frac{f_{\alpha_{k - \frac 1 2}}(u_{k-1}, u_k) - f_{\alpha_{k - \frac 1 2}}(u_{k}, u_k)}{\Delta x/2} \\ \derd{u_{k + \frac 1 4}}{t}&= \frac{f_{\alpha_{k + \frac 1 2}}(u_{k}, u_k) - f_{\alpha_{k + \frac 1 2}}(u_k, u_{k+1})}{\Delta x/2},
		 	\end{aligned}
		 \]
		 that can be thought of as the cell subdivision in figure \ref{fig:celldivision}
		\[
			\begin{aligned}
			\derd{u_k}{t} = &\frac{f_{\alpha_{k - \frac 1 2}}(u_{k-1}, u_k) - f_{\alpha_{k + \frac 1 2}}(u_k, u_{k+1})}{\Delta x} \\
			=&\frac 1 2 \left ( \frac{f_{\alpha_{k - \frac 1 2}}(u_{k-1}, u_k) - f_{\alpha_{k - \frac 1 2}}(u_{k}, u_k)}{\Delta x/2} 
			+ \frac{f_{\alpha_{k + \frac 1 2}}(u_{k}, u_k) - f_{\alpha_{k + \frac 1 2}}(u_k, u_{k+1})}{\Delta x/2} \right ) \\
			=& \frac{\derd{u_{k-\frac 1 4}}{t} + \derd{u_{k+\frac 1 4}}{t}}{2}.
			\end{aligned}
		\]
		This is the semidiscrete equivalent of the cell division usually employed to make use of the convexity of the entropy. In our case this allows us to change from $\alpha_{k - \frac 1 2}$ to $\alpha_{k + \frac 1 2}$ as our fluxes are consistent, namely $f^{GT}_{\alpha_{k - \frac 1 2}}(u,u) = f(u) = f^{GT}_{\alpha_{k - \frac 1 2}}(u,u)$. This implies together with the consistency of the entropy fluxes
		\begin{equation}
			\begin{aligned}
			\derd{U \circ u_k}{t} = &\skp{v_k, \derd{u_k}{t}} = \frac 1 2 \left (\skp{v_k, \derd{u_{k-\frac 1 4}}{t} } +  \skp{v_k, \derd{u_{k+\frac 1 4}}{t}} \right)\\
			 \leq&
			  		\frac 1 2\left(\frac{F^{GT}_{\alpha - \frac 1 2}(u_{k-1}, u_k) - F^{GT}_{\alpha -\frac 1 2}(u_{k}, u_{k})}{\Delta x/2} 
			  		+ \frac{F^{GT}_{\alpha + \frac 1 2}(u_{k}, u_k) - F^{GT}_{\alpha + \frac 1 2}(u_{k}, u_{k+1})}{\Delta x/2} \right)
		  		\\
		  	= & 
		  	\frac{F^{GT}_{\alpha_{k-\frac 1 2}}(u_{k-1}, u_k) - F^{GT}_{\alpha_{k+\frac 1 2}}(u_{k}, u_{k+1})}{\Delta x}
			  \end{aligned}
			  \label{eq:GTeiq}
		\end{equation}
		and completes the Proof. \qed
	\end{proof}

			\begin{figure}
				\centering
				\begin{tikzpicture}
					\draw (-5, 0) -- (5, 0); 
					\draw (-5, 0) node [left] {\large$t^n$};
					\draw (-1, 0) -- (-1, 2); 
					\draw (-1, 2) node [above]{ \large$f_{k - \frac 1 2}$};
					\draw (-1, 0) node [below] {\large$x_{k - \frac 1 2}$};
					\draw (1, 0) -- (1, 2); 
					\draw (1, 2) node [above]{\large $ f_{k + \frac 1 2}$};
					\draw (1, 0) node [below] {\large$x_{k + \frac 1 2}$};
					\draw [dotted](0, 0) -- (0, 2); 
					\draw (0, 2) node [above] {\large$f_k$};
					\draw (0, 0) node[below] {\large$x_k$};
					\draw (-3, 0) -- (-3, 2); 
					\draw (-2, 0) node [below] {\large$x_{k-1}$};
					\draw (3, 0) -- (3, 2);
					\draw (2, 0) node [below] {\large$x_{k+1}$};
					\draw (-2, 1) node {\large$u_{k-1}$};
					\draw (2, 1) node {\large$u_{k+1}$};
					\draw (0, 4/3) node {\large$u_k^n$};
					\draw (0, 2/3) node {\large$u_{k-\frac 1 4}\, u_{k+ \frac 1 4}$};
				\end{tikzpicture}
				\caption{The subdivision of a cell in space, initialized with the mean value of the old cell.}
				\label{fig:celldivision}
			\end{figure}
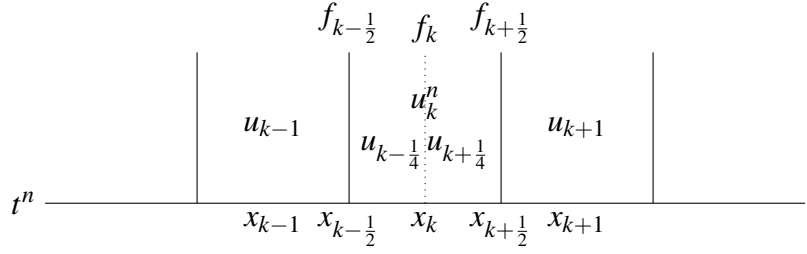

\end{theorem}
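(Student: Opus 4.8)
The plan is to build the inequality on two facts that are already available for the two constituent fluxes. The Godunov flux satisfies a discrete cell entropy inequality (Tadmor), and the Tadmor flux is entropy conservative, hence satisfies the corresponding semidiscrete cell entropy equality. First I would pass to the semidiscrete limit $\Delta t \to 0$ in the discrete Godunov cell entropy inequality to obtain, with the entropy variable $v_k = \derive{U \circ u}{u}\rest_{u_k}$,
\[
\skp{v_k, \frac{f^G(u_{k-1}, u_k) - f^G(u_k, u_{k+1})}{\Delta x}} \leq \frac{F^G(u_{k-1}, u_k) - F^G(u_k, u_{k+1})}{\Delta x},
\]
and record that the analogous statement holds for the pair $(f^T, F^T)$ with equality, since the Tadmor flux is entropy conservative by construction.

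For a spatially constant weight $\alpha_{k-\frac12} = \alpha = \alpha_{k+\frac12}$ I would expand $\derd{u_k}{t}$ using $f^{GT}_\alpha = \alpha f^G + (1-\alpha) f^T$, pair it with $v_k$, and split the inner product by linearity into a Godunov contribution and a Tadmor contribution. Because $\alpha \geq 0$ and $1-\alpha \geq 0$, the Godunov inequality and the Tadmor equality can be combined monotonically into the claimed inequality with the convex-combination entropy flux $F^{GT}_\alpha = \alpha F^G + (1-\alpha) F^T$. This is the routine case and should be little more than the algebra shown for the constant-$\alpha$ step.

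The genuinely new ingredient, and what I expect to be the main obstacle, is the case $\alpha_{k-\frac12} \neq \alpha_{k+\frac12}$. Here I would use the cell-subdivision picture of Figure \ref{fig:celldivision}: split cell $k$ into two half-cells of width $\Delta x/2$ carrying sub-averages $u_{k-\frac14}$ and $u_{k+\frac14}$, both initialised to $u_k$. The decisive point is that at the interior midpoint $x_k$ the numerical flux is \emph{consistent}, so $f^{GT}_{\alpha_{k-\frac12}}(u_k, u_k) = f(u_k) = f^{GT}_{\alpha_{k+\frac12}}(u_k, u_k)$; this is precisely what allows the weight to switch from $\alpha_{k-\frac12}$ to $\alpha_{k+\frac12}$ across $x_k$ while the two half-cell updates still average exactly to $\derd{u_k}{t}$. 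I would then apply the constant-weight case separately to each half-cell, each of which now carries a single weight, and finally cancel the two interior entropy-flux contributions using consistency of the entropy flux, $F^{GT}_{\alpha_{k-\frac12}}(u_k, u_k) = F(u_k) = F^{GT}_{\alpha_{k+\frac12}}(u_k, u_k)$. The surviving boundary terms are exactly $F^{GT}_{\alpha_{k-\frac12}}(u_{k-1}, u_k)$ and $F^{GT}_{\alpha_{k+\frac12}}(u_k, u_{k+1})$, which is the stated inequality. The one subtlety I would verify explicitly is that the semidiscrete Godunov inequality invoked in each half-cell remains valid when one argument of a flux coincides with the centre state $u_k$; this holds because the discrete cell entropy inequality is assumed for arbitrary three-point data and merely degenerates into the consistency relations when two states are equal.
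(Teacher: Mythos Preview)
Your proposal is correct and follows essentially the same route as the paper: pass to the semidiscrete limit to obtain the Godunov inequality and the Tadmor equality, handle the constant-$\alpha$ case by linearity and nonnegativity of the weights, and then treat the case $\alpha_{k-\frac12}\neq\alpha_{k+\frac12}$ via the cell-subdivision of Figure~\ref{fig:celldivision}, using consistency of both $f^{GT}_\alpha$ and $F^{GT}_\alpha$ at $(u_k,u_k)$ to switch the weight and cancel the interior contributions. The subtlety you flag about applying the semidiscrete inequality when one flux argument equals $u_k$ is indeed harmless, exactly for the reason you give.
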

The aforementioned arguments show that our flux is entropy dissipative in the usual sense if $\alpha$ is chosen to be nonzero.

In \cite{LMR2002}[Section 4.1] the entropy conservative flux of Tadmor was extended via the usage of linear combinations into an entropy conservative flux of order $2p$. We will also use this idea on our flux. As we have already used $\alpha$ as a parameter for convex combinations we will use $c^r_p$ instead to denote the coefficients in the linear combination.
\begin{definition}
	 We define the high order LMRGT flux of order $2p$ as 
		\[
			f^{LMRGT}_{\alpha}(u_{k-p+1}, \dots , u_{k+p}) = \sum_{r=1}^p c^r_p (f^{GT}_{\alpha}(u_k, u_{k+r}) +  \dots + f^{GT}_{\alpha}(u_{k-r+1}, u_{k+1})).
		\]
\end{definition}
	It follows from the definition that this flux is of order $2p$ for $\alpha_k = 0$ i.e. when the entropy equality holds. One further deduces from
	\[
		\begin{aligned}
		&f^{LMRGT}_{\alpha}(u_{k-p+1}, \dots , u_{k+p}) - f^{LMRGT}_{0}(u_{k-p+1}, \dots , u_{k+p}) \\
		=&\alpha \sum_{r=1}^p c^r_p ((f^{G}(u_k, u_{k+r})-f^T(u_k, u_{k+r})+ \dots + f^{G}(u_{k-r+1}, u_{k+1})-f^{T}(u_{k-r+1}, u_{k+1})) \\
		=&\alpha \bigO(\Delta x),
		\end{aligned}
	\]
	that the scheme is even of order $2p$ if the linear combination for an $2p$ order accurate entropy conservative flux is used in the construction and $\alpha(u_{k-q+1}, \dots, u_{k+q}) = \bigO((\Delta x)^{2p-1})$ holds.

 While one aims for a discrete entropy inequality we are at least able to proof a semidiscrete cell entropy inequality for this flux
 \begin{corollary}
	The semidiscrete scheme
		\[
		\begin{aligned}
			\derd{u_k}{t} =& \frac{f^{LMRGT}_{\alpha_{k- \frac 1 2}}(u_{k-p}, \dots, u_{k+p-1}) - f^{LMRGT}_{\alpha_{k + \frac 1 2}}(u_{k - p +1}, u_{k+p}) }{\Delta x}\\
			 =&- \sum_{r=1}^p c_p^r \frac{f^{GT}_{\alpha_{k + \frac 1 2}}(u_{k}, u_{k+r}) - f^{GT}_{\alpha_{k - \frac 1 2}}(u_{k-r}, u_k)}{\Delta x}
			 \end{aligned}
		\]
		satisfies a semidiscrete entropy inequality 
		\[
			\derd{U\circ u_k}{t} \leq \frac{F^{LMRGT}_{\alpha_{k- \frac 1 2}}(u_{k-p},\dots, u_{k+p-1}) - F^{LMRGT}_{\alpha_{k + \frac 1 2}}(u_{k-p+1},\dots, u_{k+p}) }{\Delta x}
		\]
		with an consistent numerical entropy flux given by
		\[
			F^{LMRGT}_{\alpha}(u_{k-p+1}, \dots, u_{k+p}) = \sum_{r=1}^p c_p^r(F^{GT}_\alpha(u_{k}, u_{k+r}) + \dots + F^{GT}_\alpha(u_{k-r + 1}, u_{k+1}))
		\]
		if $\forall k: \alpha_{k + \frac 1 2} \in (0,1]$ holds.
		\begin{proof}
			We follow the proof of the semidiscrete entropy inequality from \cite{LMR2002}[Section 4.1] and multiply the definition of the scheme by the entropy variable $v_k$ to find
			\begin{align*}
				\derd{U\circ u_k}{t} 
				= 
				\langle v_k, \derd{u_k}{t}\rangle  
				=&  
				 \skp{v_k,  \sum_{r=1}^p c^r_p \frac{f^{GT}_{\alpha_{k-\frac 1 2}}(u_{k-r}, u_{k}) - f^{GT}_{\alpha_{k + \frac 1 2}}(u_{k}, u_{k+r})}{\Delta x} } \\
				= &
				\sum_{r=1}^p c^r_p \skp{ v_k, \frac{f^{GT}_{\alpha_{k - \frac 1 2}}(u_{k-r}, u_{k}) - f^{GT}_{\alpha_{k + \frac 1 2}}(u_{k}, u_{k+r})}{\Delta x} } \\
				 \overset{\eqref{eq:GTeiq}}{\leq}&
				\sum_{r=1}^p c^r_p  \frac{F^{GT}_{\alpha_{k-\frac 1 2}}(u_{k-r}, u_{k}) - F^{GT}_{\alpha_{k + \frac 1 2}}(u_{k}, u_{k+r})}{\Delta x}\\
				= &
				\frac{F^{GTLMR}_{\alpha_{k - \frac 1 2}}(u_{k-p}, u_{k+p-1}) - F^{GTLMR}_{\alpha_{k + \frac 1 2}}(u_{k-p+1}, u_{k + p})}{\Delta x}.
			\end{align*}
		\end{proof} \qed
\end{corollary}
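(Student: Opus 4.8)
The plan is to imitate the telescoping argument of \cite{LMR2002}[Section 4.1], taking the single-interface inequality \eqref{eq:GTeiq} as the one nontrivial ingredient and treating everything else as bookkeeping. First I would pair the definition of the scheme with the entropy variable $v_k = \derive{U \circ u}{u}\rest_{u_k}$ and push the inner product through the finite sum, writing
\[
\derd{U \circ u_k}{t} = \skp{v_k, \derd{u_k}{t}} = \sum_{r=1}^p c^r_p \skp{v_k, \frac{f^{GT}_{\alpha_{k-\frac12}}(u_{k-r}, u_k) - f^{GT}_{\alpha_{k+\frac12}}(u_k, u_{k+r})}{\Delta x}}.
\]
The key observation is that each summand is exactly the left-hand side of \eqref{eq:GTeiq} with the adjacent neighbours $u_{k-1}, u_{k+1}$ relabelled to the distant states $u_{k-r}, u_{k+r}$. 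Since the proof of \eqref{eq:GTeiq} only used consistency of the GT flux at the cell centre together with the pairwise Godunov entropy inequality and the pairwise Tadmor entropy equality, none of which depends on the index gap, it applies verbatim for every $r$.

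Applying \eqref{eq:GTeiq} to each summand turns the equality into
\[
\derd{U\circ u_k}{t} \leq \sum_{r=1}^p c^r_p \frac{F^{GT}_{\alpha_{k-\frac12}}(u_{k-r}, u_k) - F^{GT}_{\alpha_{k+\frac12}}(u_k, u_{k+r})}{\Delta x},
\]
after which I would identify the right-hand side with the claimed entropy-flux difference. This last step is pure telescoping: expanding $F^{LMRGT}_{\alpha_{k-\frac12}} - F^{LMRGT}_{\alpha_{k+\frac12}}$ with the definition of $F^{LMRGT}$, the two staircases of width $r$ overlap, all inner terms $F^{GT}(u_{k-j}, u_{k-j+r})$ with $1 \leq j \leq r-1$ cancel between the two interfaces, and only the outermost pair $F^{GT}(u_{k-r}, u_k) - F^{GT}(u_k, u_{k+r})$ survives, reproducing exactly the weighted sum above. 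Consistency of $F^{LMRGT}$ then follows from $F^{GT}_\alpha(u,u) = F(u)$ together with the normalisation $\sum_{r=1}^p r\, c^r_p = 1$ that already guarantees consistency of $f^{LMRGT}$.

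The step I expect to be the genuine obstacle is precisely the passage through \eqref{eq:GTeiq} inside the finite sum. Read per gap $r$, \eqref{eq:GTeiq} is a single clean inequality whose slack is the nonnegative Godunov dissipation at the two subcell interfaces; but summing these inequalities against the weights $c^r_p$ preserves the inequality direction only when $c^r_p \geq 0$ for all $r$, whereas the high-order LMR coefficients are of mixed sign already at order four (for instance $c^1_2 = \frac{4}{3}$, $c^2_2 = -\frac{1}{6}$). For the purely conservative Tadmor contribution this is harmless, since it telescopes as an exact equality for arbitrary coefficient signs, exactly as in \cite{LMR2002}; the difficulty is confined to the dissipative Godunov contribution, where a negative $c^r_p$ feeds in dissipation with the wrong sign. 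The cleanest honest route is therefore to state the corollary under the hypothesis $c^r_p \geq 0$ (which holds for $p=1$, i.e. a first-order dissipative stencil), and to regard the high-order case as requiring the aggregate Godunov dissipation $\sum_{r=1}^p c^r_p$ times the per-gap slack to remain nonnegative. Establishing that this net dissipation cannot change sign is an $\alpha$- and data-dependent estimate, and it is the only point in the argument that I expect to demand more than the routine telescoping above.
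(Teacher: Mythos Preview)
Your argument is line-for-line the paper's own proof: multiply by the entropy variable $v_k$, distribute over the $r$-sum, invoke \eqref{eq:GTeiq} termwise, and regroup into the LMR entropy-flux difference. Your extra bookkeeping on the telescoping and on consistency via $\sum_r r\,c^r_p = 1$ is correct and more explicit than what the paper writes, but it is the same route.

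The point where you diverge from the paper is not a different method but a scruple the paper does not voice: you flag that the step
\[
\sum_{r=1}^p c^r_p \skp{v_k,\,\cdot\,} \;\overset{\eqref{eq:GTeiq}}{\leq}\; \sum_{r=1}^p c^r_p\,(\,\cdot\,)
\]
is only licensed when every $c^r_p \geq 0$, whereas the LMR coefficients alternate in sign from $p=2$ onward (e.g.\ $c^1_2=\tfrac{4}{3}$, $c^2_2=-\tfrac{1}{6}$). That observation is correct, and the paper's proof applies \eqref{eq:GTeiq} inside the sum without addressing it. Your diagnosis that the Tadmor part survives as an exact equality for arbitrary signs while the Godunov slack is the sole culprit is also right. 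So you have reproduced the paper's argument and, in addition, located a genuine gap that the paper's own proof shares; your proposed restriction to $p=1$ (or to an auxiliary hypothesis bounding the net Godunov dissipation) is the honest statement of what the displayed computation actually establishes.
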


 Our first numerical experiment showed that an entropy dissipative scheme alone is not enough to guarantee good approximate solutions. This is why we will now construct an algorithm to find values for $\alpha$ to control our flux according to the Dafermos entropy criterion.

\begin{definition}
	We call $\alpha: \R^{2p\times m} \to [0, 1]$ an entropy inequality predictor with a $(2p)$ point stencil if
	\begin{align*}
&\lim_{h \to 0}	\alpha(u_{i-p+1}, \dots, u_{i +p}) \\ = &\begin{cases} 0 &  \exists x \in [x_i -(p-1)\Delta x, x_i + p\Delta x]: \derive{U\circ u}{t} + \derive{F \circ u}{x} < 0\\
	1 & \forall x \in [x_i -(p-1)\Delta x, x_i + p\Delta x]:\derive{U\circ u}{t} + \derive{F \circ u}{x} = 0 \end{cases}
	\end{align*}
	holds for the complete stencil. The input values $u_{i-p+1}, \dots, u_{i+p}$ shall be the mean values of the solution in the respective cells as present in a Finite Volume solver. We will call the entropy inequality predictor slope limited if 
	\[ \abs{\alpha_i - \alpha_{i+1}} < M \quad \text{with} \quad \alpha_i = \alpha(u_{i-p+1}, \dots, u_{i +p})\]
	holds for some $M < 1$ and all $i$.
\end{definition}

	The slope limiting property was inspired by the idea to limit the slope of $\alpha$ with respect to the grid index $i$. This should not be mixed up with a bound on the slope of $\alpha$ with respect to $x$. Such a bound would be scaling with the distance between $x_i$ and $x_{i+1}$ as present in the usual definition of a difference quotient. This ensures that $\alpha$ switches between $0$ and $1$ over several mesh points while the size of this switch is scaled down with respect to the physical scale for a finer grid. The switch needs at least $\lfloor1/M \rfloor$ points.

\begin{lemma}[Smoothstep \cite{Perlin2002}]\label{lem:sm}
	The function 
		\begin{equation*}
			H_{sm}(x) = \begin{cases}
				0 & x  \leq 0 \\
				6x^5 - 15x^4+10x^3 & 0 \leq x  \leq 1\\
				1 & 1 \leq x \\
			\end{cases}
		\end{equation*}
	is a $\CC^2$ function with zero first and second derivatives at $x = 0$ and $x = 1$.
\end{lemma}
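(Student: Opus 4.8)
The plan is to verify directly that the three pieces agree up to second order at the two gluing points $x=0$ and $x=1$; away from these points $H_{sm}$ is either constant or a polynomial, hence already $\CC^\infty$ and a fortiori $\CC^2$. So the entire content of the lemma reduces to checking the one-sided limits of $H_{sm}$, $H_{sm}'$ and $H_{sm}''$ at $0$ and at $1$.

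First I would record the derivatives of the central polynomial $p(x) = 6x^5 - 15x^4 + 10x^3$. Differentiating and factoring gives
\[
p'(x) = 30x^4 - 60x^3 + 30x^2 = 30x^2(x-1)^2, \qquad p''(x) = 120x^3 - 180x^2 + 60x = 60x(2x-1)(x-1).
\]
The factored forms make the endpoint evaluations immediate: $p(0) = 0$ and $p(1) = 6 - 15 + 10 = 1$, while both $p'$ and $p''$ carry a factor vanishing at $x=0$ and a factor vanishing at $x=1$, so that $p'(0) = p'(1) = 0$ and $p''(0) = p''(1) = 0$. This already establishes the claim that the first and second derivatives vanish at both $x=0$ and $x=1$.

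For the $\CC^2$ regularity I would then compare the one-sided data at each junction. At $x=0$ the left piece is the constant $0$, whose value and all derivatives equal $0$; from the factored formulas the right piece contributes $p(0)=0$, $p'(0)=0$ and $p''(0)=0$, so value, slope and curvature match and $H_{sm}$ is $\CC^2$ across $0$. At $x=1$ the right piece is the constant $1$ with value $1$ and vanishing derivatives, while the left polynomial gives $p(1)=1$, $p'(1)=0$ and $p''(1)=0$, so again all three agree. Since the two-sided limits of $H_{sm}$, $H_{sm}'$ and $H_{sm}''$ exist and coincide at both junctions, the glued function is twice continuously differentiable everywhere.

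There is no genuine obstacle here: the argument is a short computation, and the only point requiring care is the correct use of the standard gluing criterion — a piecewise function is $\CC^2$ at a junction precisely when the values together with the one-sided first and second derivatives of the adjacent pieces agree there. The factorizations $30x^2(x-1)^2$ and $60x(2x-1)(x-1)$ are exactly what cause every required endpoint value to fall out without any further work.
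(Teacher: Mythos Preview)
Your proof is correct. The paper does not actually supply a proof of this lemma; it simply states the result with a citation to \cite{Perlin2002}, treating it as a known fact. Your direct verification --- computing $p'(x)=30x^2(x-1)^2$ and $p''(x)=60x(2x-1)(x-1)$ and reading off the endpoint values from the factored forms --- is exactly the natural way to justify it, and nothing more is needed.
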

\begin{figure}
	\centering
	\begin{tikzpicture}\begin{axis}[]
	\addplot[black,domain=0:1,samples=201,]
	{6*x^5-15*x^4+10*x^3};
	\end{axis}
	\end{tikzpicture}
	\caption{Plot of the smooth step function.}
	\end{figure}
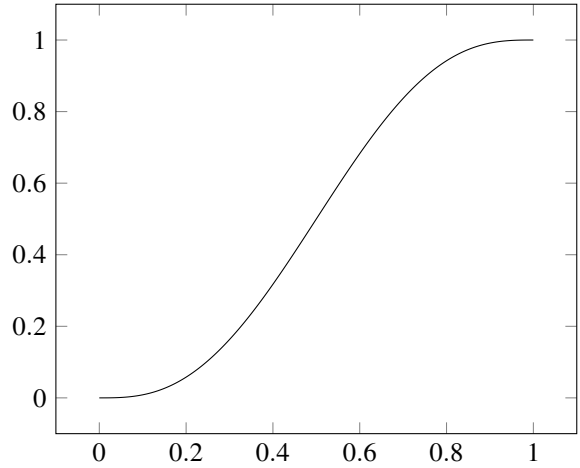

We will need some special operations on functions for the construction of our predictor which are motivated by mollification. The convolution \cite[p. 216]{LaxFun} of $f$ and $g$ is defined as
\[
f * g(x) = \int_\Omega f(y) g(x-y) \vd y
\]
for suitable $f$ and $g$. Please note that this can be interpreted as the integral of all combinations of $f(\cdot)$ with $g(- \cdot)$ multiplied over $\R$ and indexed by their respective shift between the argument of $f$ and $g$. A mollification, first defined in \cite{Friedrichs1944Molli}, is a convolution of a function $f$ with a suitable $g$ giving a smoother function as $f$.  The result \cite[p. 216]{LaxFun}
\begin{align*}
\norm{f*g}_1 \leq \norm{f}_1 \norm{g}_1,
\end{align*}
relates the norm of the mollified function $f * g$ to the original function.
 As convolution is coupled to (Lebesgue)-integration which in turn provides the Lebesgue norms, one can ask if also an equivalent of convolution for other norms exists. We will find such an equivalent with interesting properties for our application related to the uniform norm $\norm \cdot_\infty$.

\begin{definition}[Minkowsky sum and Minkowsky product]
	Given two sets $A \subset \R$ and $B\subset \R$ we define the Minkowsky sum and Minkoswky product as
	\[A \oplus B = \set{a + b}{a\in A, b \in B}\]
	and
	\[A \odot B = \set{ab}{a \in A, b \in B}.\]
	\end{definition}

\begin{lemma}[Special properties of the Minkowsky product and sum]
	\label{lem:supaddhom}
	Let $A, B \subset \R$ be two sets. In this case
\[ \sup (A \oplus B) = \sup A + \sup B\]
	holds. If additionally $\forall a \in A: a \geq 0$ and $\forall b \in B: b\geq 0$ hold the equality
\[	\sup (A \odot B) = \sup A \cdot \sup B
\]
is also satisfied. In other words the supremum is additive and positively homogeneous for sets. 

\end{lemma}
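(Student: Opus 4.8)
The plan is to establish each identity by the standard sandwich technique, proving the two opposite inequalities $\leq$ and $\geq$ separately; throughout I would assume $A$ and $B$ are nonempty and bounded above (an unbounded set forces the corresponding supremum, and hence both sides, to $+\infty$, which is the only way either side can be infinite once the elements are real). The key tool is the pair of defining properties of the supremum of a bounded set: $\sup A$ is an upper bound, and for every $\epsilon > 0$ there exists an element of $A$ exceeding $\sup A - \epsilon$.

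For the additive identity I would first note that every $a \in A$ and $b \in B$ satisfy $a + b \leq \sup A + \sup B$, so $\sup A + \sup B$ is an upper bound for $A \oplus B$ and hence $\sup(A \oplus B) \leq \sup A + \sup B$. For the reverse inequality, fix $\epsilon > 0$ and pick $a \in A$, $b \in B$ with $a > \sup A - \epsilon/2$ and $b > \sup B - \epsilon/2$; then $a + b \in A \oplus B$ and $a + b > \sup A + \sup B - \epsilon$, whence $\sup(A \oplus B) > \sup A + \sup B - \epsilon$. Letting $\epsilon \to 0$ closes the argument.

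The multiplicative identity follows the same scheme but is where the nonnegativity hypothesis becomes indispensable. For the upper bound I would chain two inequalities: since $0 \leq a \leq \sup A$ and $0 \leq b \leq \sup B$, nonnegativity lets me multiply $b \leq \sup B$ by $a \geq 0$ and then $a \leq \sup A$ by $\sup B \geq 0$, giving $ab \leq a \sup B \leq \sup A \cdot \sup B$; this is exactly the step that would fail for sets with elements of mixed sign. For the reverse inequality I would write $s = \sup A$, $t = \sup B$, dispose of the trivial case $s = 0$ or $t = 0$ (where nonnegativity forces the relevant set to reduce to $\sset{0}$), and otherwise, given $\epsilon > 0$, choose $\delta \in (0, \min(s,t))$ with $\delta(s+t) < \epsilon$ and pick $a > s - \delta > 0$, $b > t - \delta > 0$; then $ab > (s-\delta)(t-\delta) = st - \delta(s+t) + \delta^2 > st - \epsilon$.

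I anticipate the main obstacle to be the lower bound for the product, since the cross term in $(s-\delta)(t-\delta)$ means I cannot approximate $s$ and $t$ independently to first order. The remedy is to couple the two tolerances through a single $\delta$ chosen small relative to both $\epsilon/(s+t)$ and $\min(s,t)$, the latter guaranteeing that the approximating elements stay strictly positive so that multiplying the two strict inequalities is legitimate. The remaining bookkeeping — the degenerate cases of empty sets or a vanishing supremum — is routine once these signs are under control.
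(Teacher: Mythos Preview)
Your argument is correct. The paper actually states this lemma without proof, treating it as an elementary fact about suprema, so there is no argument in the paper to compare against; your two-sided sandwich via the $\epsilon$-characterisation of the supremum is the standard route, and you correctly isolate the one place where nonnegativity is essential (multiplying the inequalities $a \leq \sup A$ and $b \leq \sup B$) and handle the degenerate case $\sup A = 0$ or $\sup B = 0$ separately. One tiny caveat: your parenthetical remark that an unbounded set forces \emph{both} sides to $+\infty$ is fine for the sum but not literally true for the product when the other set is $\{0\}$; since you already treat the zero-supremum case separately and otherwise work under a boundedness assumption, this does not affect the proof.
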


\begin{definition}[Minkowsky product of functions]
	Let $f, g: \R \to \R$. Their Minkowsky product is a map $f \odot g: \R \to \R$, defined by 
	\[
	(f \odot g)(x) = \set{f(x-y)g(y)}{y \in \R}.
	\]
	In other words the Minkowsky product of $f$ and $g$ at $x$ is the set of all values of $f$ multiplied by $g$ so that the argument added together gives $x$. Compare this to the integrand of the convolution.
	\end{definition}

\begin{definition}
	Let $f, g: \R \to \R$ be bounded real maps. The sup-mollification of $f$ and $g$ is defined as
	\[
	f \circledast g (x)= \sup f \odot g(x) = \sup_{y \in \R}(f(x-y)g(x)).
	\]
	Please note that $f \odot g$ is a set depending on $x$ and the supremum is not taken over $x$, but over the set at the point $x$.
	\end{definition}

We will use the defined sup-mollification operator to ensure the slope limiting property of our entropy inequality predictor. We will now prove some useful lemmas that will also show that our entropy inequality predictor is Lipschitz continuous and keeps $\alpha$ up at one in a region around an entropy dissipating shock. 

\begin{lemma}
	\label{lem:mollnormeq}
	Let $f, g: \R \to \R_{\geq 0}$ be bounded functions. In this case
	\[ \sup_x f \circledast g(x) = \sup_z f(z) \sup_z g(z) \]
	holds.
	\end{lemma}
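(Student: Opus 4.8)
The plan is to unwind the definition of the sup-mollification into a double supremum, collapse it into a single supremum over $\R^2$, and then decouple the two variables by a linear change of coordinates before invoking Lemma~\ref{lem:supaddhom}.

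First I would rewrite the left-hand side in terms of the underlying set. By the definition of $f \odot g(x)$ as the set $\set{f(x-y)g(y)}{y \in \R}$, the sup-mollification is $f \circledast g(x) = \sup_{y \in \R} f(x-y) g(y)$, so that
\[
  \sup_x f \circledast g(x) = \sup_{x \in \R} \sup_{y \in \R} f(x-y) g(y) = \sup_{(x,y) \in \R^2} f(x-y) g(y),
\]
the last equality being the standard fact that an iterated supremum equals the supremum taken over the product set. (Writing it through the set $f \odot g(x)$ also sidesteps the slight inconsistency in the displayed definition, where the inner factor should read $g(y)$ rather than $g(x)$.)

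Next I would substitute $(z, w) = (x - y, y)$. Since the map $(x,y) \mapsto (x-y, y)$ is a bijection of $\R^2$ onto itself, with inverse $(z,w) \mapsto (z+w, w)$, the supremum is unchanged under this reindexing:
\[
  \sup_{(x,y) \in \R^2} f(x-y)g(y) = \sup_{(z,w) \in \R^2} f(z) g(w).
\]
Now the two factors are decoupled, and the set of attained products is exactly the Minkowsky product $\ran f \odot \ran g$ of the ranges $\ran f = \set{f(z)}{z \in \R}$ and $\ran g = \set{g(w)}{w \in \R}$, both contained in $\R_{\geq 0}$ by hypothesis. Hence Lemma~\ref{lem:supaddhom} applies and gives
\[
  \sup_{(z,w) \in \R^2} f(z) g(w) = \sup(\ran f \odot \ran g) = \sup \ran f \cdot \sup \ran g = \sup_z f(z)\, \sup_z g(z),
\]
which is the claim.

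I do not expect a serious obstacle here; the argument is essentially bookkeeping built on the change of variables and Lemma~\ref{lem:supaddhom}. The one point that genuinely needs care is the use of nonnegativity: the factorisation of the supremum of a product into a product of suprema fails for sign-changing functions, and it is precisely the hypothesis $f, g \geq 0$ that licenses the final step through Lemma~\ref{lem:supaddhom}. I would also verify that boundedness of $f$ and $g$ keeps all suprema finite, so that the arithmetic on the right-hand side is well defined.
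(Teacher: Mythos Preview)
Your proof is correct and follows essentially the same route as the paper: unfold the double supremum, apply the bijective change of variables $(x,y)\mapsto(x-y,y)$ to decouple the arguments, identify the resulting set of products as $\ran f \odot \ran g$, and invoke Lemma~\ref{lem:supaddhom}. Your side remarks on the typo in the definition and on the role of nonnegativity and boundedness are accurate and add clarity that the paper's terse calculation omits.
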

\begin{proof}
	An easy calculation shows
	\begin{align*}
	\sup_x f\circledast g &= \sup_x \, \sup_y f(y)g(x-y) = \sup_{(x, y)\in \R^2} f(y) g(x-y) \\
	&= \sup_{(x, y)\in \R^2}f(y)g(x) = \sup \ran f \odot \ran g \\
	&= (\sup \ran  f) \cdot  (\sup \ran g).
	\end{align*}\qed
	\end{proof}

\begin{lemma}
	\label{lem:supdiff}
	Let $f, g: \R \to \R$ be bounded functions. Then 
	\[
		\abs{\sup_x f(x) - \sup_y g(y)} \leq \sup_x \abs{f(x)-g(x)}
	\]
	holds.
	
	\begin{proof}
		We start by stating that 
	\[
		\sup_x f(x) = \sup_x g(x) + f(x) - g(x) \leq \sup_y g(y) + \sup_x f(x)-g(x)
	\]
	holds. This can be rearranged and bounded so that
	\[
		\sup_x f(x) - \sup_y g(y) \leq \sup_x f(x)-g(x) \leq  \sup_x \abs{f(x)-g(x)}
	\]
	holds. As this also holds if the roles of $f, g$ are swapped, it follows
	\[
		\abs{\sup_x f(x)- \sup_y g(y)} \leq \sup_x \abs{f(x)-g(x)}.
	\]
	\qed
	\end{proof}
	\end{lemma}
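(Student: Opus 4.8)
The plan is to reduce the two-sided bound to a single one-sided estimate and then close the argument by symmetry. Note first that both suprema are finite precisely because $f$ and $g$ are bounded, so every difference written below is well defined; this is the only place the hypothesis enters.

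First I would record the pointwise identity $f(x) = g(x) + \of{f(x) - g(x)}$, valid for every $x$. Fixing $x$ and bounding each summand by its own supremum gives $f(x) \leq \sup_y g(y) + \sup_z \of{f(z) - g(z)}$. Taking the supremum over $x$ on the left, where the right-hand side no longer depends on $x$, yields
\[
\sup_x f(x) \leq \sup_y g(y) + \sup_z \of{f(z) - g(z)}.
\]
Equivalently, one may obtain this step from Lemma~\ref{lem:supaddhom}: since every value $f(x)$ is the sum of $g(x)$ and $(f-g)(x)$, the range inclusion $\ran f \subseteq \ran g \oplus \ran(f-g)$ holds, whence $\sup \ran f \leq \sup \ran g + \sup \ran(f-g)$ by the additivity of the supremum proved there.

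Next I would bound $\sup_z \of{f(z)-g(z)} \leq \sup_z \abs{f(z)-g(z)}$ and rearrange to the one-sided estimate $\sup_x f(x) - \sup_y g(y) \leq \sup_z \abs{f(z)-g(z)}$. Because the hypotheses and the right-hand side are symmetric in $f$ and $g$, interchanging their roles gives $\sup_y g(y) - \sup_x f(x) \leq \sup_z \abs{f(z)-g(z)}$ as well, and combining the two inequalities produces the claimed bound on the absolute value.

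I expect no serious obstacle: the statement is essentially the assertion that $\sup$ is $1$-Lipschitz with respect to the uniform norm. The only point demanding care is that the suprema need not be attained, so the estimate must be carried out for each fixed $x$ \emph{before} passing to the supremum, rather than by manipulating hypothetical maximizers; boundedness then guarantees that the subtraction of the two suprema remains meaningful throughout.
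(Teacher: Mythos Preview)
Your proposal is correct and follows essentially the same route as the paper: write $f(x)=g(x)+(f(x)-g(x))$, bound by the two separate suprema, rearrange to the one-sided estimate, and close by symmetry. The aside invoking Lemma~\ref{lem:supaddhom} is a harmless alternative justification of the same step.
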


\begin{lemma}[Sup-mollification is a Lipschitz continuous operator]
	For bounded $f_1, f_2, g: \R \to \R$ it holds
	\[
		\norm{f_1 \circledast g - f_2 \circledast g}_\infty \leq \norm{g}_\infty\norm{f_1 - f_2}_\infty  
	\]
	\begin{proof}
		We use lemma \ref{lem:supdiff} and \ref{lem:mollnormeq} to prove
		\begin{align*}
				\norm{f_1 \circledast g - f_2 \circledast g}_\infty &= \sup_x \, \abs {\sup_y \, f_1(y)g(x-y) - \sup_y \, f_2(y)g(x-y)} \\
				&\overset{\text{lem \ref{lem:supdiff}}}\leq \sup_x \, \sup_y \, \abs{f_1(y)g(x-y) - f_2(y)g(x-y)} \\
				&= \sup_x \, \sup_y \abs{f_1(y) - f_2(y)} \abs{g(x-y)}\\
				&\overset{\text{lem \ref{lem:mollnormeq}}}{=} \sup_x \, \abs{f_1(x)-f_2(x)}\sup_y\abs{g(y)}\\
				& = \norm{g}_\infty \norm{f_1 - f_2}_\infty.
		\end{align*}
	\qed
	\end{proof}
	\end{lemma}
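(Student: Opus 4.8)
The plan is to unfold the definitions of the uniform norm and of the sup-mollification, reduce the outer difference of suprema to a supremum of a difference via Lemma \ref{lem:supdiff}, factor out the common kernel $g$, and finally recognise the resulting double supremum as a sup-mollification of nonnegative functions so that Lemma \ref{lem:mollnormeq} applies.

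First I would write
\[
\norm{f_1 \circledast g - f_2 \circledast g}_\infty = \sup_x \abs{\sup_y f_1(y)g(x-y) - \sup_y f_2(y)g(x-y)}.
\]
For each fixed $x$, the two inner suprema are suprema of the $y$-indexed functions $y \mapsto f_1(y)g(x-y)$ and $y \mapsto f_2(y)g(x-y)$, so Lemma \ref{lem:supdiff} bounds their difference by $\sup_y \abs{f_1(y)g(x-y) - f_2(y)g(x-y)}$. Pulling the common factor $g(x-y)$ out of the absolute value rewrites this as $\sup_y \abs{f_1(y) - f_2(y)}\abs{g(x-y)}$.

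Next I would take the outer supremum over $x$, obtaining $\sup_x \sup_y \abs{f_1(y) - f_2(y)}\abs{g(x-y)}$. Since $\abs{f_1 - f_2}$ and $\abs{g}$ are nonnegative bounded functions, this double supremum is precisely $\sup_x \of{\abs{f_1 - f_2} \circledast \abs{g}}(x)$, and Lemma \ref{lem:mollnormeq} evaluates it as $\sup_y \abs{f_1(y)-f_2(y)} \cdot \sup_z \abs{g(z)} = \norm{f_1 - f_2}_\infty \norm{g}_\infty$, which is the asserted bound.

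I do not anticipate a genuine obstacle, as the two auxiliary lemmas do the heavy lifting; the only points requiring care are that Lemma \ref{lem:supdiff} must be applied pointwise in $x$ to the $y$-indexed families rather than to $f_1 \circledast g$ and $f_2 \circledast g$ directly, and that the nonnegativity hypothesis of Lemma \ref{lem:mollnormeq} is met only after the absolute values have been inserted, so that the kernel $\abs{g}$ and the factor $\abs{f_1 - f_2}$ are genuinely nonnegative.
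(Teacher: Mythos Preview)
Your proposal is correct and follows essentially the same route as the paper's proof: unfold the definition, apply Lemma~\ref{lem:supdiff} pointwise in $x$, factor out $\abs{g(x-y)}$, and finish with Lemma~\ref{lem:mollnormeq}. Your explicit remark that Lemma~\ref{lem:mollnormeq} only becomes applicable once the absolute values yield the nonnegative functions $\abs{f_1-f_2}$ and $\abs{g}$ is a point the paper leaves implicit.
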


\begin{lemma}[Slope condition inequality]
	Let $f, g: \R \to \R$ be bounded functions. If $g$ satisfies for a fixed $h \in \R$
	\[
	 \exists M \in \R : \sup_{x \in \R} \, \abs {g(x + h) - g(x)} \leq M,
	\]
	then the sup-mollification $f \circledast g$ full fills
	\[
		\abs{f\circledast g(x+h) - f\circledast g(x)} \leq M \cdot \sup \, \abs{f(y)}.
	\]
	\begin{proof}
		We again use lemma \ref{lem:supdiff} to prove 
		\begin{align*}
				\abs{f\circledast g(x+h) - f\circledast g(x)} & = \abs{\sup_y f(y)g(x+h-y) - \sup f(y)g(x-y)} \\
				&\leq \sup_y \abs{f(y)g(x+h-y) - f(y)g(x-y)} \\
				& = \sup_y \abs{f(y)}\abs{g(x+h-y) - g(x-y)} \\
				&\leq \sup_y \abs{f(y)} \sup_y \abs{g(x+h-y) - g(x-y)}\\
				& \leq M \cdot \sup \abs{f}.
		\end{align*}
			\qed
	\end{proof}
	\end{lemma}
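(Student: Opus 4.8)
The plan is to reduce the entire estimate to the single-variable supremum bound already established in Lemma~\ref{lem:supdiff}. Writing out the definition of the sup-mollification in the form used throughout, we have $f \circledast g(x+h) = \sup_y f(y) g(x+h-y)$ and $f \circledast g(x) = \sup_y f(y) g(x-y)$, so the quantity to be controlled is the absolute difference of two suprema taken over the \emph{same} index set $y \in \R$. The central observation is that these two $y$-indexed families differ only through the shift $h$ in the argument of $g$, which is exactly the situation Lemma~\ref{lem:supdiff} is designed to handle.

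First I would apply Lemma~\ref{lem:supdiff} to the two functions $y \mapsto f(y) g(x+h-y)$ and $y \mapsto f(y) g(x-y)$, which immediately gives
\[
\abs{f \circledast g(x+h) - f \circledast g(x)} \leq \sup_y \abs{f(y) g(x+h-y) - f(y) g(x-y)}.
\]
Next I would factor the common $f(y)$ out of the right-hand side to obtain $\sup_y \abs{f(y)}\,\abs{g(x+h-y) - g(x-y)}$, and then split this supremum of a product of nonnegative quantities into a product of suprema, bounding it by $\sup_y \abs{f(y)} \cdot \sup_y \abs{g(x+h-y) - g(x-y)}$. Finally, the change of variables $z = x-y$ turns the second factor into $\sup_z \abs{g(z+h) - g(z)}$, which is at most $M$ by hypothesis, yielding the claimed bound $M \cdot \sup \abs{f}$.

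The only genuinely nontrivial step will be the first one: passing from a difference of suprema to the supremum of the pointwise difference. This is where the supremum operator, which does not interact linearly with subtraction, must be tamed, and it is precisely the content of Lemma~\ref{lem:supdiff}; since that lemma is already available, the step reduces to a direct citation rather than a fresh argument. Everything afterward is elementary factoring together with the shift substitution that matches the shifted difference of $g$ to the hypothesis, so I expect the proof to be short.
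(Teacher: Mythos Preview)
Your proposal is correct and follows essentially the same route as the paper: apply Lemma~\ref{lem:supdiff} to pass from the difference of suprema to the supremum of the pointwise difference, factor out $\abs{f(y)}$, split the supremum of the product, and invoke the hypothesis on $g$. The only cosmetic addition in your write-up is the explicit substitution $z = x - y$, which the paper leaves implicit.
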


\begin{lemma}[Plateau condition inequality]
	Let $f, g: \R \to \R$ be bounded,  $x_0 \in \R$ and $ \epsilon > 0$. Then
\[
	\forall x \in [-\epsilon, \epsilon]: g(x) > c \in \R
\] implies 
\[
	\forall x \in [x_0 - \epsilon, x_0 + \epsilon]: f \circledast g(x) \geq c f(x_0).
\] 
\begin{proof}
	Let $x \in [x_0 - \epsilon, x_0 + \epsilon]$. If we set $y = x_0$ it follows $x-y \in [-\epsilon, \epsilon]$ and
	\begin{align*}
		f(y)g(x-y) = f(x_0) g(x-y) \geq f(x_0) c
		\\ \implies f(x_0) c \leq \sup_y f(y)g(x-y) = f \circledast g(x).
	\end{align*}
	\qed
\end{proof}
	\end{lemma}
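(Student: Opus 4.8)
The plan is to read the conclusion straight off the definition of the sup-mollification as a supremum over the shift variable, exploiting the trivial but decisive fact that a supremum dominates any single sampled value. The only genuine choice in the argument is which shift to sample, and it must be picked so that the plateau hypothesis on $g$ gets activated.

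First I would fix an arbitrary $x \in [x_0 - \epsilon, x_0 + \epsilon]$ and unwind the definition to $f \circledast g(x) = \sup_{y \in \R} f(y)\, g(x-y)$. Because this is a supremum over all $y$, it is bounded below by the expression at any particular choice of $y$; that is, for every fixed $y$ we have $f \circledast g(x) \geq f(y)\, g(x-y)$. The whole proof then reduces to making one good choice of $y$.

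The natural choice is $y = x_0$, and the key point is an interval-arithmetic check: since $x$ lies within $\epsilon$ of $x_0$, the shifted argument satisfies $x - x_0 \in [-\epsilon, \epsilon]$, which is precisely the interval on which the hypothesis forces $g > c$. Hence $g(x - x_0) > c$, and multiplying by $f(x_0)$ gives $f(x_0)\, g(x-x_0) \geq c\, f(x_0)$. Chaining this with the supremum lower bound yields $f \circledast g(x) \geq f(x_0)\, g(x-x_0) \geq c\, f(x_0)$, and since $x$ was an arbitrary point of $[x_0 - \epsilon, x_0 + \epsilon]$ the claim follows.

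The step meriting the most care is the multiplication $g(x-x_0) > c \implies f(x_0)\, g(x-x_0) \geq c\, f(x_0)$: it preserves the inequality direction only when $f(x_0) \geq 0$, as a negative factor would reverse it. This is not a real obstacle in context, since $f$ plays the role of a nonnegative weight in the application (e.g. the smoothstep of Lemma \ref{lem:sm}, which takes values in $[0,1]$); for the statement in full generality one should either assume $f \geq 0$ or read the bound only at points where $f(x_0) \geq 0$. No continuity or compactness is invoked, and boundedness of $f$ and $g$ serves only to keep the defining supremum finite.
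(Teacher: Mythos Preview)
Your proof is correct and follows exactly the same approach as the paper: fix $x\in[x_0-\epsilon,x_0+\epsilon]$, choose the shift $y=x_0$ so that $x-y\in[-\epsilon,\epsilon]$, and bound the supremum from below by that single term. Your observation that the multiplication step tacitly needs $f(x_0)\geq 0$ is well taken; the paper's proof makes the same implicit assumption, which is harmless in the intended application where $f$ is the nonnegative output of $H_{sm}$.
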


\begin{definition}[Discrete sup-mollification]
	Let for $n \in \N$ be the vector space of step functions on $[0, 1]$ denoted as
	\[
	S_n = \set{f:[0, 1] \to \R}{\forall i = 0, \dots, n-1: f\rest_{[i/n, (i+1)/n]} = f_i \in \R}.
	\]
	We can further define an embedding $Z:S_n \to S$ of this space into the step functions over $\R$, denoted as $S$, by
	\[
		Z: S_n \to S, f \mapsto Zf, Zf(x) = \begin{cases}f(x)& x \in [0, 1] \\ 0 & \text{else} \end{cases}.
	\]
	These two definitions allow us to define the discrete sup-mollification of $f, g \in S_n$ as
		\[
		(f \circledast g)\rest_{[i/n, (i+1) / n]} = (Zf \circledast Zg)\rest_{[i/n, (i+1) / n]} = \max_{j \in \sset{0, \dots, n-1}} \tilde f_j \tilde g_{i-j}. \quad \text{for } i = 0, \dots, n-1.
		\]
	The values $\tilde f_i \in \R$ and $\tilde g_i \in \R$ relate to $f_i$ and $g_i$ as 
	\[
		\tilde f_i = \begin{cases}f_i & i \in \{0, \dots, n-1\}  \\ 0 & \text{else}\end{cases} \quad \tilde g_i = \begin{cases}g_i & i \in \{0, \dots, n-1\}  \\ 0 & \text{else} \end{cases}.
	\]
	The sup mollfication for step functions on the interval $[a, b]$ shall be defined using the coordinate transform $\phi(x) = \frac{x-a}{b-a}$ and the corresponding inverse $\phi^{-1}(y) = a + y(b-a)$. Using this transform yields
	\[
		(f \circledast g)(x) = (Z(f \circ \phi) \circledast Z(g \circ \phi)) \circ \phi^{-1}(x).
	\]
	\end{definition}

	It is also possible to exactly sup-mollify piecewise linear functions.

\begin{example}[The Godunov Flux entropy inequality predictor]
	A suitable entropy inequality predictor can be constructed from the Godunov flux by looking at it's entropy dissipation
	\begin{equation*}
		s^n_k = \frac{F(u^n_{k+1}, u^n_{k}) - F(u^n_k, u^n_{k-1})} { \Delta x} + \frac{U(u_k^{n+1}) - U(u^{n}_{k})} { \Delta t} \leq 0
	\end{equation*} as given in \cite{Tadmor84I, Tadmor84II}.
	As the Godunov scheme is entropy stable $s^n_k \leq 0$ holds $\forall n, k$. We can use this value to predict if the entropy equality holds - or the inequality. A problem occurs as in fact $s^n_k <0$ is even true for smooth initial conditions like $u_0(x) = \sin(\pi x)$ in the first time step. 
	
	Similar Problems appear in the context of edge sensors and local viscosity \cite{AGY2004, TW2012} and are usually solved by a thresholding process. In our case this threshold will be carried out by the smooth step function $H_{sm}$ from lemma \ref{lem:sm} to ensure a Lipschitz-continuous transition. As there are in fact two free parameters in this approach, one for determining the lower threshold, and another to control the width of the smooth step, it is imperative to find parameters that are at least independent of the used grid. We therefore define
	\[
		u^+ = \max_k u_k^n \quad u^- = \min_k u_k^n
	\]
	for single conservation laws and
	\[
		u^+ = u(\arg \max_x U \circ u(x, t), t) \quad u^- = u(\arg \min_x U\circ u(x, t), t)
	\]
	as the cell values having maximum and minimum entropy in the domain for systems of conservation laws. These values can be afterwards used to construct the Riemann problem with the initial conditions
	\[
		u_1(x, 0) = \begin{cases}u^- \\ u^+ \end{cases} \quad u_2(x, 0) = \begin{cases}u^+ & x< 0\\u^- & x \geq 0\end{cases}.
	\]
	By looking at their entropy dissipation $s^n_k$ when the Godunov scheme is applied one finds a reference 
	\[
		s^\text{ref} = \min\left(\min_k s^n_k(u_1(\cdot, 0)), \min_k s^n_k(u_2(\cdot, 0)) \right)
	\] 
	for the entropy dissipation of a strong shock that could be present in the solution. While this approach involves a lot of hand waving the numerical results are quite satisfactory and further research could be centered around this issue. The values
	\[
		r^n_k = H_{sm}\of {\frac{\frac {s^n_k} {s^{\text{ref}}} - a }{b}}
	\]
	 depend smoothly on $s^n_k$ but can still have extremely localized spikes as wide as only a few cells. The parameter $a \in \R$ is a threshold under which the result of the entropy inequality predictor should be thought of as zero, while $b \in \R$ corresponds to a typical amplitude of a spike in the entropy dissipation indicating a shock. Numerical tests indicate that a instantaneous switching between fluxes leads to undesired oscillations around their interface. Furthermore, the stencil of the high order Tadmor flux is wider than the stencil of the Godunov scheme and the derivation of the high order Tadmor scheme assumes an entropy conservative solution in its derivation, which will be violated by an entropy dissipating discontinuity in the solution. These two problems are considered in the definition of the entropy inequality predictor. Responsible are the slope limiting property and the definition as a scale for the violation of the entropy equality on the entire stencil of a scheme. In this case the wider stencil of the high order modified Tadmor scheme is relevant. We will satisfy these requirements using sup-mollification of $r^n_k$, i.e. its associated piecewise constant function, and a suitable kernel. We chose the cut hat function
	\[
	h(x) = \max(0, \min(1, 2x+2, -2x + 2))
	\]
	in a properly rescaled fashion for this purpose. 
		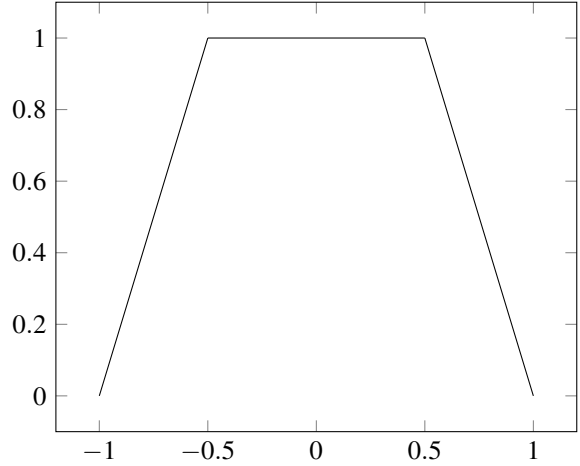
\begin{figure}
			\centering
		\begin{tikzpicture}\begin{axis}[]
		\addplot[black,domain=-1:-0.5,samples=201,]
		{2*x+2};
		\addplot[black, domain=-0.5:0.5, samples = 201]
		{1};
		\addplot[black, domain=0.5:1, samples=201,]
		{-2*x+2};
		\end{axis}
		\end{tikzpicture}
		\caption{Plot of a cut hat function $h(x)$.}
	\end{figure}
	 and define
	\[
	\alpha = r \circledast h.
	\]
	Other choices are possible, and it is not clear yet if a smoother mollifier improves the scheme.

\end{example}

\begin{lemma}
	The Godunov Flux inequality predictor 
	\[
	\alpha^n = H_{sm}\of {\frac{\frac {s^n_k} {s^{\text{ref}}} - a }{b}} \circledast h
	\]
	is slope limited. 
	\begin{proof}
			This follows from the fact that 
			\[\ran H_{sm}\of {\frac{\frac {s^n_k} {s^{\text{ref}}} - a }{b}} \subset [0, 1]\]
			holds and the cut hat function has limited slope using the slope condition inequality.\qed
		\end{proof}
		
	\end{lemma}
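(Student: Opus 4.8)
The plan is to read the predictor $\alpha = r \circledast h$ as a single instance of the sup-mollification operator and to apply the Slope condition inequality directly, taking $f = r$ for the mollified function and $g = h$ for the kernel. Concretely, set $r = H_{sm}\of{\frac{s^n_k/s^{\text{ref}} - a}{b}}$, understood as its associated bounded piecewise-constant function on $\R$, and let $h$ denote the (rescaled) cut-hat kernel. Then $\alpha(x) = (r \circledast h)(x) = \sup_y r(x-y) h(y)$, and the grid values are $\alpha_i = \alpha(x_i)$ with $x_{i+1} - x_i = \Delta x$. The whole argument is then just a matter of checking the two hypotheses of the Slope condition inequality and substituting the resulting bounds.

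First I would record the two ingredients the lemma needs. Since $\ran H_{sm} \subset [0,1]$ by Lemma \ref{lem:sm}, every value $r^n_k$ lies in $[0,1]$, whence $\sup_y \abs{r(y)} \leq 1$. Second, I would bound the variation of the kernel across one cell. The cut-hat is piecewise linear, and in the predictor it is rescaled so that its support spans a fixed number $N$ of cells; its maximal slope then scales like $1/(N\Delta x)$, so the increment of $h$ over a single shift by $\Delta x$ is at most a constant $M$ of order $1/N$. Choosing the rescaling (equivalently $N$) large enough makes $M < 1$, and crucially $M$ no longer depends on $\Delta x$. This is exactly the hypothesis $\sup_x \abs{h(x+\Delta x) - h(x)} \leq M$ of the Slope condition inequality for the fixed shift $\Delta x$.

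With both ingredients in hand, the Slope condition inequality yields
\[
\abs{\alpha(x + \Delta x) - \alpha(x)} = \abs{(r \circledast h)(x+\Delta x) - (r \circledast h)(x)} \leq M \cdot \sup_y \abs{r(y)} \leq M < 1,
\]
and evaluating at $x = x_i$ gives $\abs{\alpha_i - \alpha_{i+1}} \leq M < 1$, which is precisely the slope-limited property. The only genuinely delicate point, and the one I would spend care on, is the passage between the continuous sup-mollification used in the lemma and the discrete sup-mollification $\max_j \tilde r_j \tilde h_{i-j}$ that actually defines $\alpha_i$ on the grid: one must verify that evaluating $r \circledast h$ at the cell representatives $x_i$ agrees with the discrete formula, and that the rescaling of $h$ is pinned to a fixed cell count rather than to the physical width, so that the constant $M$ stays strictly below $1$ uniformly as $\Delta x \to 0$. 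Everything else is a direct substitution into the already-proved inequalities.
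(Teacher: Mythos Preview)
Your proposal is correct and follows exactly the paper's approach: bound $r=H_{sm}(\cdot)$ by $1$ via its range, use the limited slope of the cut-hat kernel $h$, and invoke the Slope condition inequality for $\alpha = r\circledast h$. You have simply supplied the details the paper leaves implicit (the rescaling of $h$ to a fixed cell count so that $M<1$ independently of $\Delta x$, and the identification of the discrete and continuous sup-mollifications at grid points).
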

While the aforementioned entropy inequality predictor is able to deliver satisfactory results the solution of a Riemann problem, needed to calculate the Godunov flux, is not always easily obtained. This is why two other entropy inequality predictors, one of theoretical and one also of practical value, were constructed. 
\begin{example}[The Lax-Friedrichs entropy inequality predictor]
	The aforementioned construction can be also applied to the Lax-Friedrichs scheme, and it's corresponding entropy inequality and entropy flux, proved in \cite{Lax71, Tadmor84I, Tadmor84II}.
	The entropy flux is given by
	\begin{equation*}
		F(u_l, u_r) = \frac{F(u_l) + F(u_r)}{2} + \frac{U(u_l) - U(u_r)}{2 \lambda},
	\end{equation*}
leading to the entropy production
	\begin{equation*}
	s^n_k = \frac{F(u^n_{k+1}, u^n_{k}) - F(u^n_k, u^n_{k-1})} { \Delta x} + \frac{U(u_k^{n+1}) - U(u^{n}_{k})} { \Delta t} \leq 0.
	\end{equation*}
	Entering these results leads to the entropy inequality predictor
	\[
	\alpha^n = H_{sm}\of {\frac{\frac {s^n_k} {s^{\text{ref}}} - a }{b}} \circledast h.
	\]
\end{example}
Sadly, while this predictor has a rigorous provable background from \cite{Lax71}, its practical use is complicated. The line between an area of entropy conservation and an entropy dissipating shock is blurred by the big amount of dissipation present in the Lax-Friedrichs scheme that also happens in smooth areas. This is why a second entropy inequality predictor was constructed with reduced dissipation. This reduction is based on linear reconstruction in an ENO type fashion \cite{ENOIII}.

\begin{example}[The ENO2 Lax Friedrichs entropy inequality predictor]
	Given an piecewise constant solution $u_k^n$ one first calculates the piecewise linear reconstructions
	\[
		\tilde u_k^n(x) = \begin{cases}
			u_k^n + a_l(x-x_k) & a_l < a_r \\
			u_k^n + a_r(x-x_k) & a_r \geq a_l \\
		\end{cases} \quad a_l = \frac{u_k^n - u_{k-1}^n}{x_{k} - x_{k-1}} \quad a_r = \frac{u_{k+1}^n - u_{k}^n}{x_{k+1} - x_k}.
	\]
	Using this reconstruction directly in a finite volume entropy inequality is not possible, as the scheme
	\[
		u_k^{n+1} = u_k^n + \lambda\left(f\left(\tilde u^n_{k-1}\left(x_{k- \frac 1 2}\right),\tilde u^n_k\left(x_{k - \frac 1 2}\right)\right) - f\left(\tilde u^n_k\left(x_{k+ \frac 1 2}\right),\tilde u^n_{k+1}\left(x_{k + \frac 1 2}\right)\right) \right)
	\]
	has to the authors knowledge no known entropy fluxes. We instead seek to calculate an approximation of the entropy dissipation of this reconstructed solution by using it as the initial condition for a first order Lax-Friedrichs solver. It is sufficient to use this solver at points of discontinuity as the entropy equality holds for the smooth areas of the solution. We therefore use the subdivision of our primary cells sketched in figure \ref{fig:LFcelldiv} to start the Lax-Friedrichs method. Let $x_{k+1/2}$ be the cell boundary between the cell around $x_k$ and $x_{k+1}$ and $\frac{\Delta x}{6} \geq \epsilon > 0$ an arbitrary parameter for a sub-cell size. We introduce new cell boundaries at 
	\[x_l^-=x_{k+\frac 1 2} - 3\epsilon \quad 
		x_l^+=x_m^- = x_{k+\frac 1 2} - \epsilon \quad
		x_m^+ = x_{k + \frac 1 2} + \epsilon = x_r^- \quad
		x_r^+ = x_{k+\frac 1 2} + 3 \epsilon
	\]
	to form new cells around \[
	x_l = x_{k+ \frac 1 2} - 2 \epsilon \quad 
	x_m = x_{k + \frac 1 2}\quad
	 x_r = x_{k + \frac 1 2} + 2 \epsilon.\] 
	These cells are initialized with the mean values of $\tilde u^n(x)$ in these cells
		\[
				v_l = \frac{1}{2 \epsilon} \int_{x_l^-}^{x_l^+} \tilde u^n(x) \vd x \quad v_m = \frac{1}{2 \epsilon} \int_{x_m^-}^{x_m^+} \tilde u^n(x) \vd x \quad v_r = \frac{1}{2 \epsilon} \int_{x_r^-}^{x_r^+} \tilde u^n(x) \vd x.
		\]
		After one step of calculations we can take the entropy dissipation of the Lax-Friedrichs Scheme in the middle cell
		\[
			s_{k+\frac 1 2} = U\left(\frac {v_l + v_r}{2} + \lambda \frac {f(v_l) - f(v_r)}{2} \right) - \frac{U(v_l) + U(v_r))} 2 + \lambda \frac{F(v_l) - F(v_r)}{2}
		\]
		as an approximation of the true entropy dissipation at this edge.
		The value of $\epsilon$ is not critical in this calculation, and we can pass to the limit $\epsilon \to 0$ to find
		\begin{align*}
			s_{k + \frac 1 2} =  U\left(\frac {\tilde u^n_{k}\left(x_{k+ \frac 1 2}\right) + \tilde u^n_{k + 1}\left(x_{k+\frac 1 2} \right)}{2} + \lambda \frac {f\left(\tilde u^n_{k}\left(x_{k+ \frac 1 2}\right)\right) - f\left(\tilde u^n_{k + 1}\left(x_{k+\frac 1 2} \right)\right)}{2} \right) 
			\\
			- \frac{U\left(\tilde u^n_{k}\left(x_{k+ \frac 1 2}\right)\right) + U\left(\tilde u^n_{k + 1}\left(x_{k+\frac 1 2} \right)\right)} 2 + \lambda \frac{F\left(\tilde u^n_{k}\left(x_{k+ \frac 1 2}\right)\right) - F\left(\tilde u^n_{k + 1}\left(x_{k+\frac 1 2} \right)\right)}{2}
		\end{align*} and proceed as before with the usual stepping and sup-mollification operators. One should note that as $\lambda$ is constant the time step used for this calculation tends to zero and hence this entropy inequality is of no use for the ENO-LxF scheme and only gives an estimate for the entropy dissipation. 
	\begin{figure}
		\resizebox{\textwidth}{!}{
	\begin{tikzpicture}{scale = 0.5}
		\draw [dashed] (-10, -4) -- (-10, 4); 
		\draw [dashed] (0, -4) -- (0, 4);
		\draw [dashed] (10, -4) -- (10, 4);
		\draw [dotted] (-0.33, -4) -- (-0.33, 4); 
		\draw [dotted] (0.33, -4) -- (0.33, 4);
		\draw [dotted] (-1, -4) -- (-1, 4);
		\draw [dotted] (1, -4) -- (1, 4);
		\draw (-10, -2) -- (0, -2);
		\draw (0, 2) -- (10, 2);
		\draw (-10, -3) -- (0, -1);
		\draw (0, 1) -- (10, 3);
		\draw (-0.33, 0) -- (0.33, 0);
		\draw (-1, -1.13) -- (-0.33, -1.13);
		\draw (0.33, 1.13) -- (1, 1.13);
		\draw (0, -4) node[below] {\Large $x_{k + \frac 1 2}$};
		\draw (-5, -4) node[below] {\Large $x_k$};
		\draw (5, -4) node[below] {\Large $x_{k+1}$};
		\draw (-5, 3) node[above] {\Large $\tilde u_k^n(x)$};
		\draw (5, 3) node[above] {\Large $\tilde u_{k+1}^n(x)$};
		\draw (-0.66, 3) node {\Large $v_l$};
		\draw (0, 4) node[above] {\Large $v_m$};
		\draw (0.66, 3) node{\Large $v_r$};
		\draw (-0.66, -4) node[below]{\Large $x_l$};
		\draw (0.66, -4) node[below]{\Large $x_r$};
		
	\end{tikzpicture}
	}
		\caption{Subdivision and averaging after an ENO reconstruction}
		\label{fig:LFcelldiv}
	\end{figure}
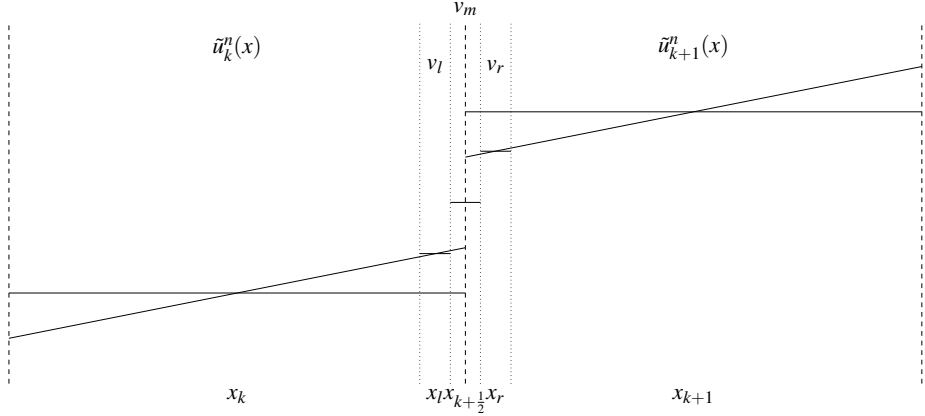
	\end{example}
\begin{remark}
	The aforementioned method is easily generalized to several space dimensions using a grid with tensor product structure and application of the presented methods in every direction, as also pointed out for correction procedure via reconstruction (CPR) Methods in their summation-by-parts (SBP) interpretation in \cite{ranocha2016summation}.
	\end{remark}
\begin{remark}
	The entropy inequality predictors are not discontinuity sensors. The function $\alpha$ should sense positions where entropy dissipation takes place. The entropy equality dictates that there has to be in fact a discontinuity if entropy is dissipated. The opposite implication does not hold. A discontinuity can be present in the solution but still no entropy is dissipated. An example of such behavior is the contact discontinuity present in some solutions to Riemann problems for the Euler equations. Therefore, a different approach, not equivalent to sensing discontinuities, is the aim of the scheme.
\end{remark}

\section{Numerical Tests}
\label{sec:3}
 \subsection{Numerical Tests for the Burgers equation}
Numerical tests were carried out for the new numerical flux composed of the entropy inequality predictor coupled to the convex combination flux. Sadly our scheme is not free of open parameters. The parameters $a, b$ were chosen as $a = 1/20, b = 1/100$ after some experiments. Wrong selection of $a$ results in a late or early detection of needed entropy dissipation. Those problems vanish for finer grids, as the entropy inequality predictor gives a refined distinction between conservation and entropy dissipation in this case. Still the optimal values for $a$ are only distributed over one order of magnitude. To high values of $b$ result in difficulties during time integration as this yields big Lipschitz constants for the resulting flux, while to small values result in a slow switching of the scheme between entropy conservation and extremal entropy dissipation. The values for $b$ are not as critical as values for $a$ and equally acceptable values span several orders of magnitude.
A second decision which had to be made concerned the entropy pair. The pair $U(u) = u^2/2, F(u) = u^3/3$ was chosen for this purpose.  We compare the new scheme directly to the Godunov scheme as the new scheme uses the Godunov scheme for dissipative regions. The Burgers equation was solved for $N = 50$ cells and periodic boundary conditions. A known good solution was calculated by a Godunov scheme with $N_{control} = 5000$ cells. Time integration was carried out using the SSPRK104 algorithm \cite{SSPRK}.

 \begin{figure}
 	\begin{subfigure}{0.49\textwidth}
 		\includegraphics[width=\textwidth]{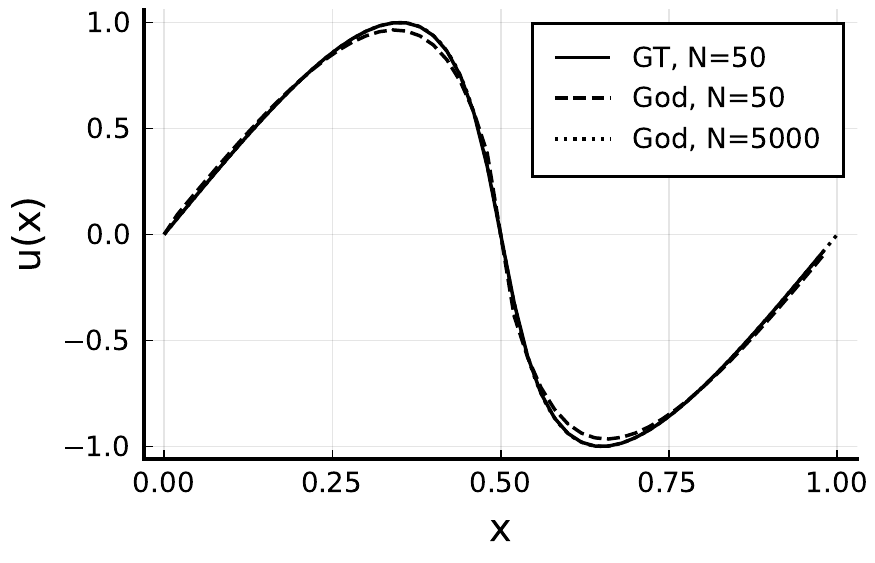}
 			\subcaption{Solutions at $t=0.22$}
 	\end{subfigure}
 		\hfill
 \begin{subfigure}{0.49\textwidth}
 	\includegraphics[width=\textwidth]{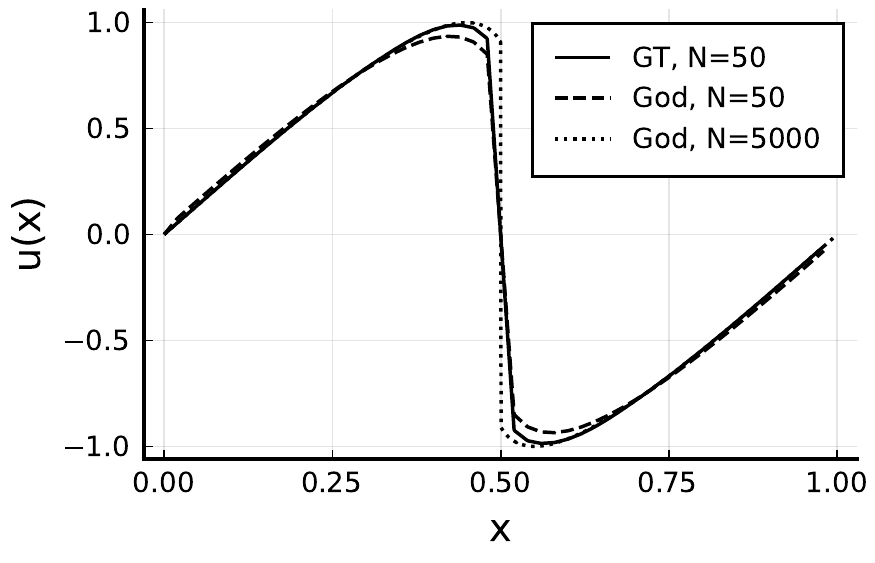}
 	\subcaption{Solutions at $t = 0.42$}	
 
 \end{subfigure}

\begin{subfigure}{0.49\textwidth}
	\includegraphics[width=\textwidth]{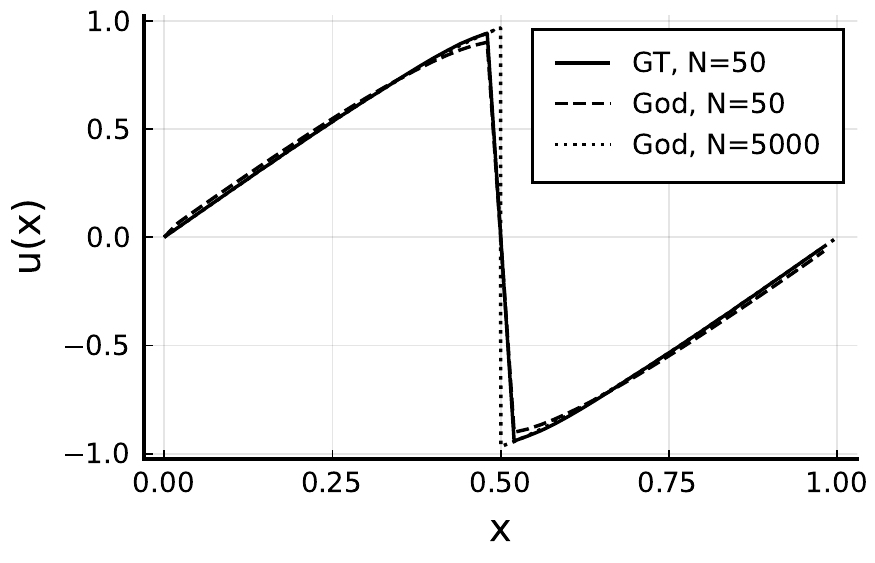}
	\subcaption{Solutions at $t = 0.62$}	
\end{subfigure}
		\hfill
\begin{subfigure}{0.49\textwidth}
	\includegraphics[width=\textwidth]{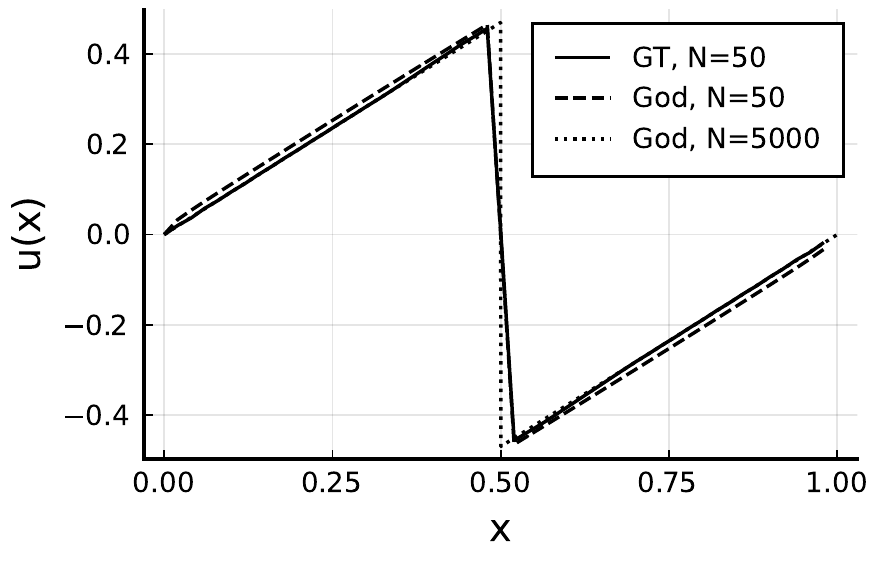}
	\subcaption{Solutions at $t = 1.82$}
	
\end{subfigure}
\begin{subfigure}{0.99\textwidth}
	\includegraphics[width=\textwidth]{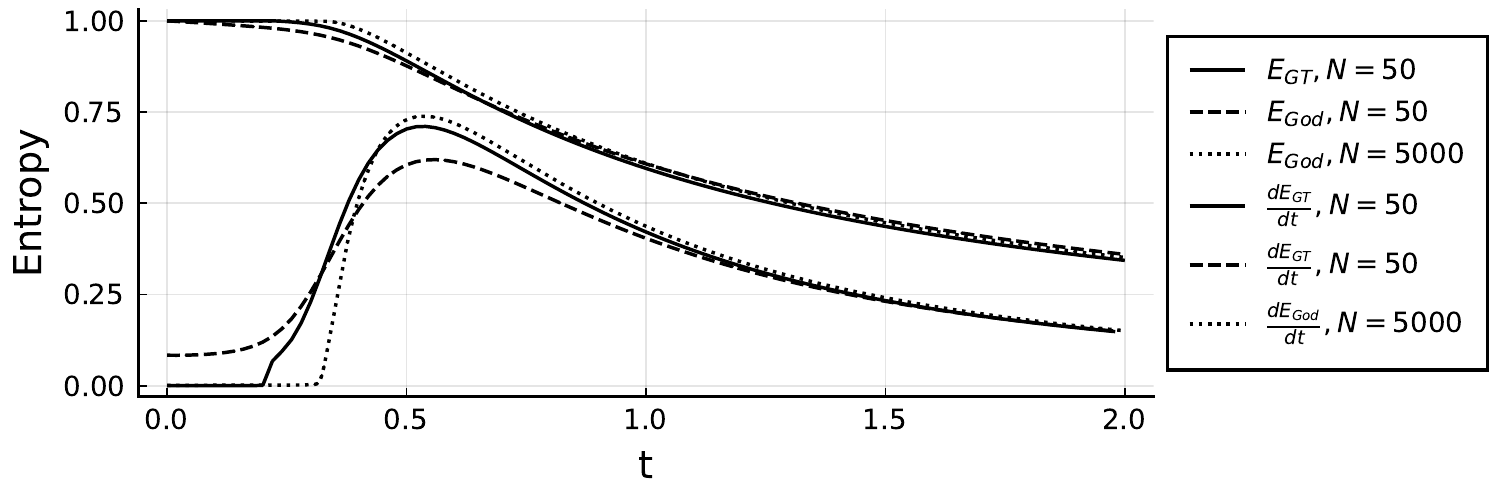}
	\subcaption{Total entropies and their derivatives with respect to time for all 3 solvers.}
	\label{fig:gtentro}
\end{subfigure}

\caption{Numerical experiment with the GT scheme. The entropy conservative flux is the eight order flux \cite{Tadmor87, LMR2002} while the classic Godunov scheme with exact Riemann solver was used as entropy dissipative flux. Time integration was carried out using the SSPRK104 method and a CFL number of $\lambda = 0.5$. The parameters of the Godunov entropy inequality predictor were $a = 1/20, b=1/100$. The cutted hat function used in the sup mollification was rescaled to fit the support of the hat into a $2p+1$ wide stencil with $p=8$, i.e. to fit the stencil of the high order flux.}
\label{fig:gtsol}
 	\end{figure}
Our numerical tests were carried out to test two assumptions.
\begin{itemize}
	\item The total entropy of the numerical solution of the GT scheme is a (good) approximation of the total entropy of the true solution. 
	\item The norm $\norm{u(\cdot, t) - u_{numeric}(\cdot, t)}$ is improved by our scheme over the error one gets from the Godunov scheme.
\end{itemize}
The first assumption seems to be true. By looking at figure \ref{fig:gtentro} the entropy of the GT scheme is, by construction, constant as long as u is smooth. The behavior is also desirable for non-smooth solutions as the numerical derivative of the total entropy approximates the exact derivative quite well. Interestingly the total entropy of the less dissipative GT scheme is smaller than the total entropy of the Godunov method for large times, which is the same for the exact solution.
Assumptions on the quality of the solution can be  made from the solution plots in \ref{fig:gtsol}. The smooth solutions show good correspondence between exact solution and the GT solution. In the discontinuous case the solution at the discontinuity corresponds to the solution of the Godunov method but is still significantly more exact in smooth areas.
\begin{figure}
	\begin{subfigure}{0.49 \textwidth}
		\includegraphics[width=\textwidth]{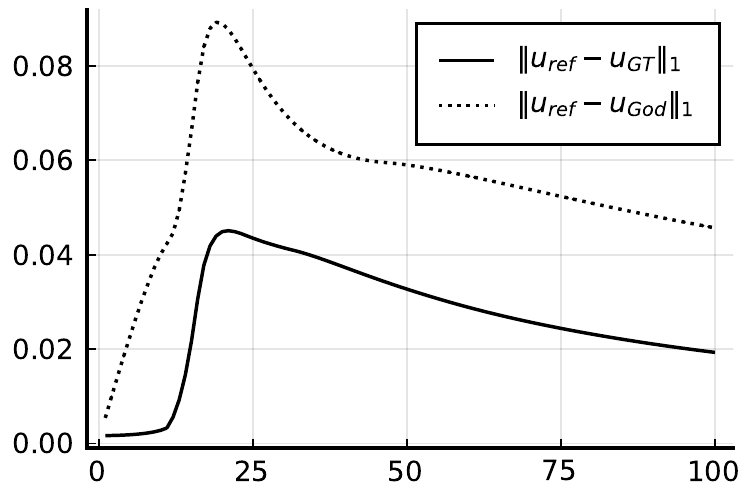}
		\subcaption{$\Leb_1$ norm}
		\end{subfigure}
	\hfill
	\begin{subfigure}{0.49 \textwidth}
		\includegraphics[width=\textwidth]{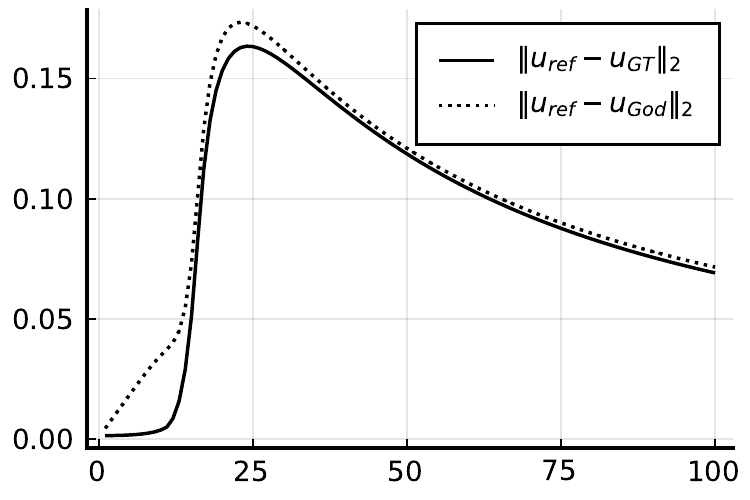}
		\subcaption{$\Leb_2$ norm}
		\end{subfigure}
	\caption{Error norms over time for the solutions in figure \ref{fig:gtsol} over time.}
	\label{fig:gterror}
	\end{figure}
After these qualitative assumptions some quantitative measurements were carried out in form of norms of the errors. While the $\Leb_1$ norm of the error was reduced for smooth and non-smooth solutions the $\Leb_2$ norm error for non-smooth solutions was only improved by a small amount as the shock is not better resolved than by the Godunov scheme. 
\begin{figure}
	\begin{subfigure}{0.49 \textwidth}
		\includegraphics[width=\textwidth]{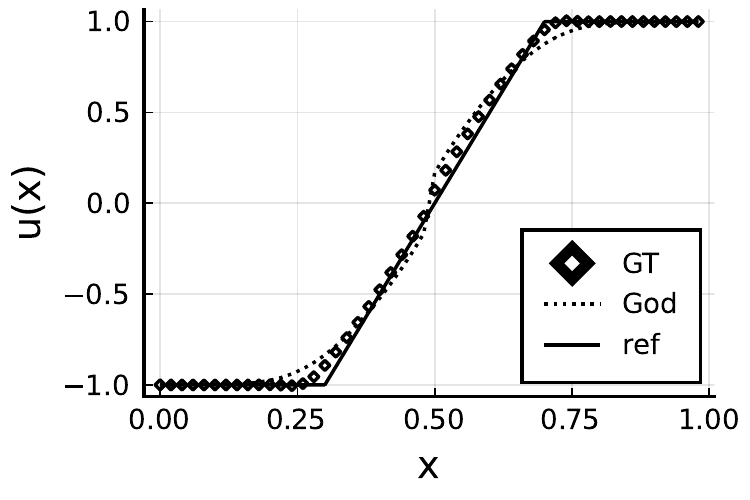}
		\subcaption{$t = 0.4$}
	\end{subfigure}
	\hfil
	\begin{subfigure}{0.49 \textwidth}
		\includegraphics[width=\textwidth]{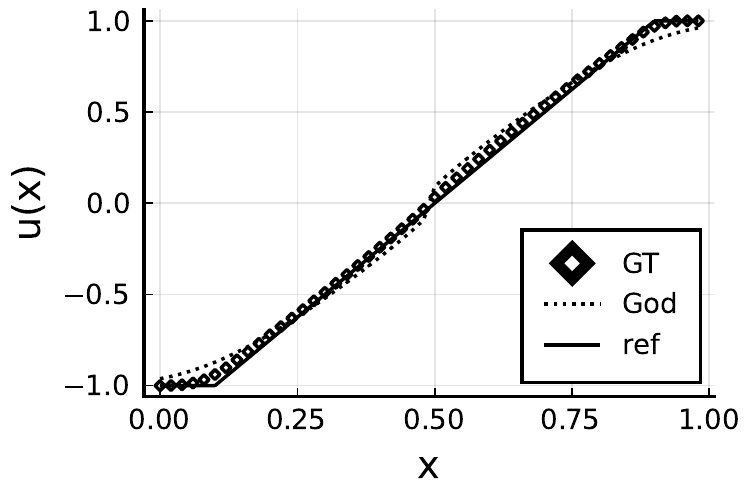}
		\subcaption{$t = 0.8$}
	\end{subfigure}
	\caption{Results for a Riemann Problem given by the initial condition $u_l = -1.0$, $u_r = 1.0$. A grid consisting of 50 cells was used in conjunction with a CFL number of $\lambda = 0.5$ and the SSPRK104 time integration method \cite{SSPRK}. Parameters where the same as in figure \ref{fig:gtsol}. The solution of the basic Godunov scheme was plotted as a reference for the possible sonic point glitch. The used GT scheme, based on the eight order entropy conservative flux, uses the same Godunov method as a low order flux. The sonic glitch is significantly reduced by the application of the GT scheme.}
	\label{fig:BurgSP}
\end{figure}
Several upwind schemes show glitches concerning rarefaction waves \cite{Tang2005Sonic}. The scheme was tested for this deficiency using a Riemann problem with $u_l = -1.0$ and $u_r = 1.0$ as initial condition and the results are shown in figure \ref{fig:BurgSP}. One could imagine that the sonic glitch, clearly present in the solution calculated by the Godunov method, will be also part of the solution calculated by the GT scheme. This is only partly the case. The strength of the sonic glitch is significantly reduced compared to the Godunov scheme. 
\subsection{Numerical Tests for the Euler equations of Gas-Dynamics}
After these promising results for the Burgers equation numerical tests were carried out for the Euler equations of gas dynamics
\[
	u =(\rho, \rho v, E) 
	\quad f(\rho, \rho v, E) = \begin{bmatrix} \rho v \\ \rho v^2 + p\\ v(E + p) \end{bmatrix} 
	\quad P = (\gamma - 1)\left (E - \frac 1 2 \rho v^2 \right)
\]
in conjunction with the LxFRI scheme and the ENO2LxF entropy inequality predictor. The physical entropy \cite{Harten83b, Tad2003}
\[
	U(\rho, \rho v, E) = - \rho S  \quad F(\rho, \rho v, E) = - \rho v S \quad S = \ln(p \rho^{- \gamma})
\]
was used in the entropy inequality predictor whereas the entropy conservative flux
\begin{align*}
	f^R(u_l, u_r) =  \begin{pmatrix}\hat \rho \hat u \\ \hat \rho \hat u^2 + \hat p_1 \\ \hat \rho \hat u \hat H  \end{pmatrix} \quad z = \sqrt {\frac \rho p} \begin{pmatrix} 1 \\ u \\ p \end{pmatrix} \\
	\hat \rho = \overline z_1 z_3^\text{ln} \quad \hat p_1 = \frac{\overline z_3}{\overline z_1} \quad \hat p_2 = \frac{\gamma + 1}{2 \gamma} \frac{z_3^\text{ln}}{z_1^\text{ln}} + \frac{\gamma-1}{2 \gamma}\frac{\overline z_3}{\overline z_1} \\
	\hat a = \sqrt{\frac{\gamma \hat p_2}{\hat \rho}} \quad
	\hat H = \frac{\hat a^2}{\gamma - 1} + \frac{\hat u^2}{2}
\end{align*}
developed by Ismail and Roe in \cite{IsmailRoe2009} that conserves the selected entropy
was used for the entropy conservative part of the scheme. This flux was selected as it is also used in several other publications \cite{FMT2012, Fisher2013Fluxdiff}. Other options, including fluxes that also conserves the kinetic energy, are possible \cite{ranocha2018comparison}.
The parameters $a = 1/1000$ and $b = 1/1000$, that were determined experimentally as before, were used. A new set of parameters is needed as a different entropy inequality predictor is used whose typical amplitudes and offset are different. Different entropies can also influence these parameters and an analysis giving explicit formulas is planed for a future publication. Reference solutions were calculated by the LMRLxFRI scheme with $N=1600$ points of order 6 and using SSPRK104 for time integration. First the ability of the scheme to resolve shocks was tested by the Shu-Osher test case number 6 from \cite{SO1989} given by the initial conditions
\begin{align*}
	\rho_0(x, 0) = \begin{cases}3.857153  \\ 1 + \epsilon \sin(5 x)  \end{cases} 
	\quad 
	v_0(x, 0) = \begin{cases} 2.629  \\ 0  \end{cases}
	p_0(x, 0) = \begin{cases} 10.333 & x < 1 \\ 1 & x \geq 1 \end{cases}
\end{align*}
The results can examined in figure \ref{fig:shuosh6}. A second experiment was carried out to demonstrate the ability of the scheme to achieve high order in smooth areas. The initial condition
\[
	\rho_0(x, 0) = 3.857153 + \epsilon \sin(2 x)  \quad v_0(x, 0) =  2.0 \quad p_0(x, 0) = 10.33333.
\]
is a density variation that is carried downstream to the right and the results from the convergence analysis are shown in figure \ref{fig:convana}.
\begin{figure}
	\begin{subfigure}{0.49\textwidth}
			\includegraphics[width=\textwidth]{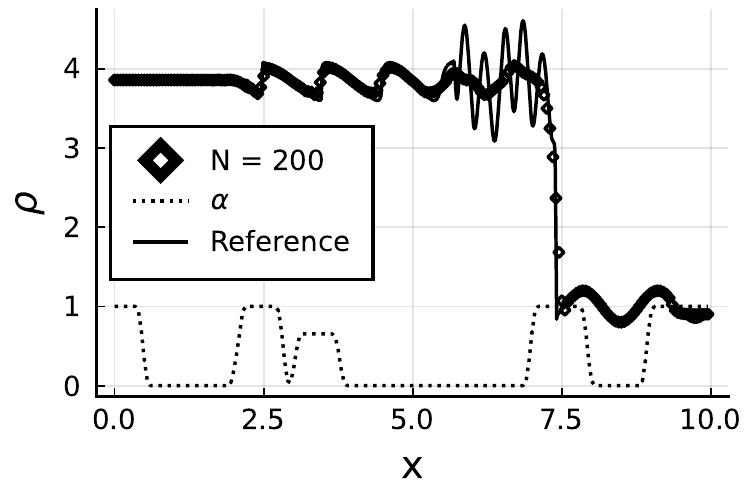}
		\subcaption{mass density}
	\end{subfigure}
	\hfill
	\begin{subfigure}{0.49\textwidth}
	\includegraphics[width=\textwidth]{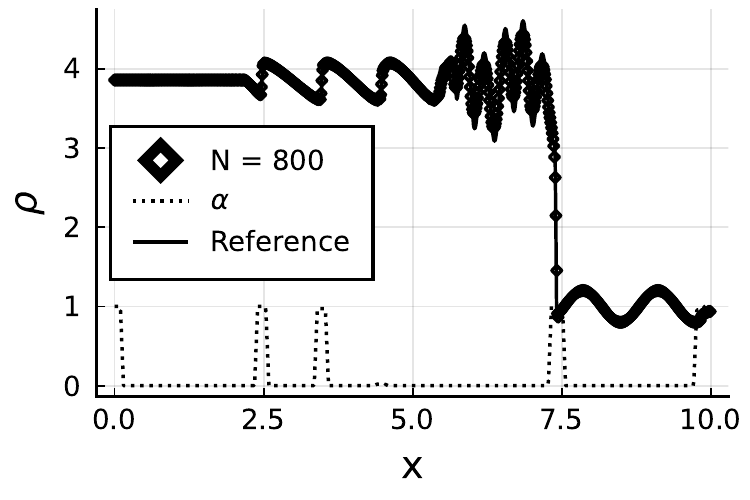}
	\subcaption{mass density}
\end{subfigure}
	\begin{subfigure}{0.49\textwidth}
			\includegraphics[width= \textwidth]{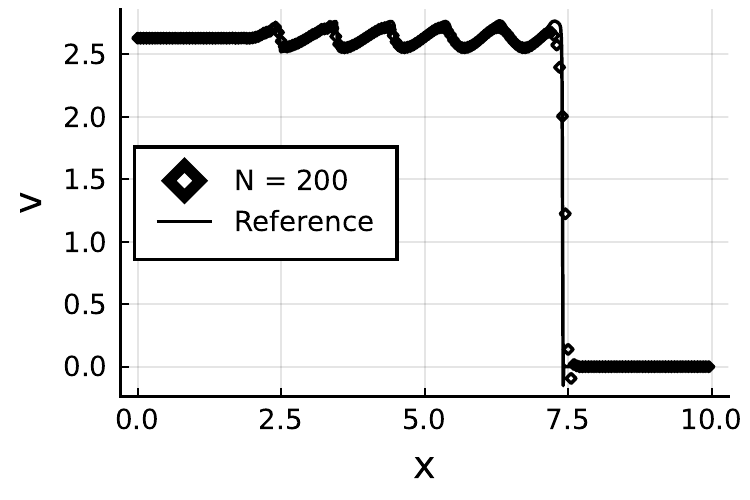}
			\subcaption{velocity}
	\end{subfigure}
\hfill
	\begin{subfigure}{0.49\textwidth}
	\includegraphics[width= \textwidth]{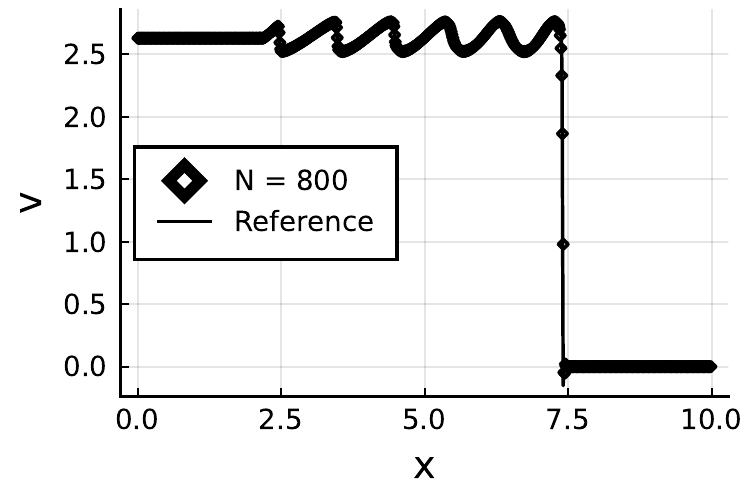}
	\subcaption{velocity}
\end{subfigure}
	\begin{subfigure}{0.49\textwidth}
			\includegraphics[width=\textwidth]{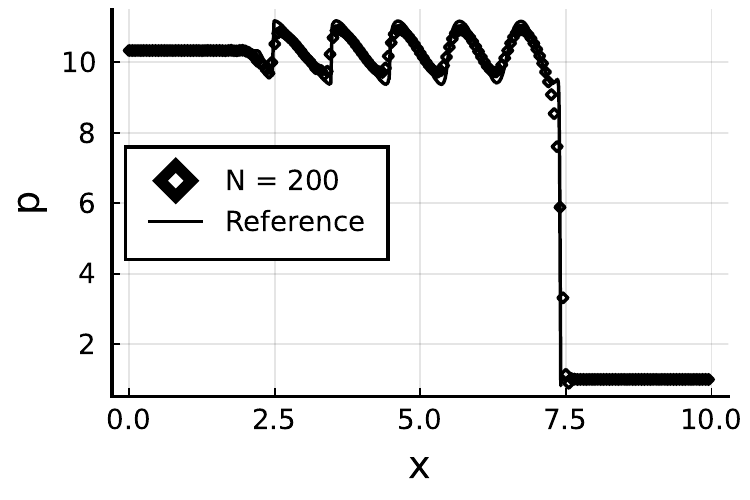}
			\subcaption{pressure}
	\end{subfigure}
	\hfill
	\begin{subfigure}{0.49\textwidth}
		\includegraphics[width=\textwidth]{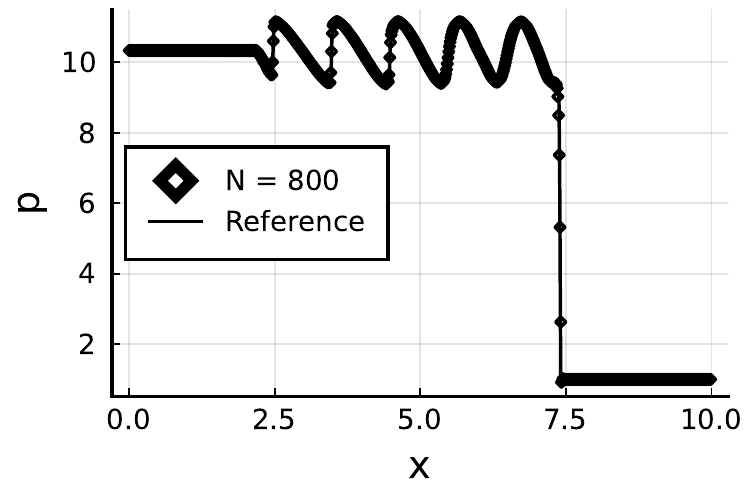}
		\subcaption{pressure}
	\end{subfigure}
	\caption{Shu-Osher testcase at $t = 1.8$. LMRLxFRI scheme of Order 6. The entropy conservative flux is the entropy conservative flux from \cite{IsmailRoe2009} while the Lax-Friedrich flux was used as entropy dissipative flux. Time integration was carried out using the SSPRK104 method and a CFL number of $\lambda = 0.1$. The parameters of the ENO2-Lax-Friedrichs entropy inequality predictor were $a = 1/1000, b=1/1000$. The cutted hat function used in the sup mollification was rescaled to fit the support of the hat into a $2p+1$ wide stencil with $p=6$, i.e. to fit the stencil of the high order flux. The values of $\alpha_{k+\frac 1 2}$ were also plotted.}
	\label{fig:shuosh6}
\end{figure}
\begin{figure}
	\centering
	\begin{subfigure}{0.49\textwidth}
		\includegraphics[width=\textwidth]{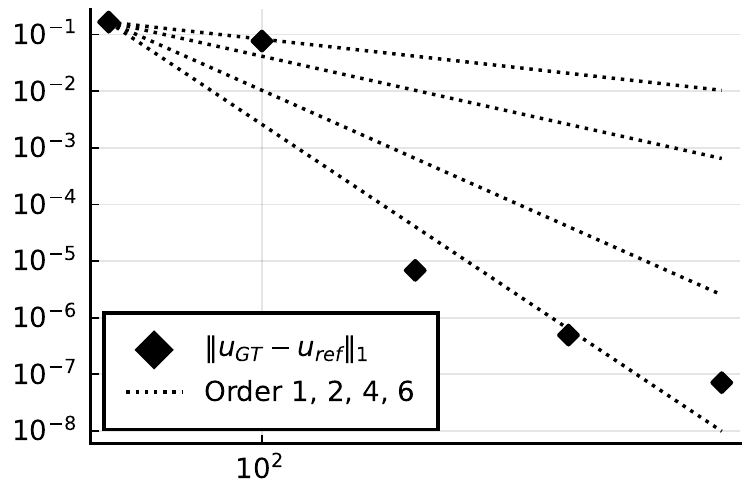}
		\subcaption{$\Leb_1$ Error for $N \in \sset{50, 100, 200, 400, 800}$ Points. Please note that the fourth order strong stability preserving Runge-Kutta method used for time-integration limits the achievable order to 4.}
	\end{subfigure}
	\hfill
	\begin{subfigure}{0.49\textwidth}
		\includegraphics[width=\textwidth]{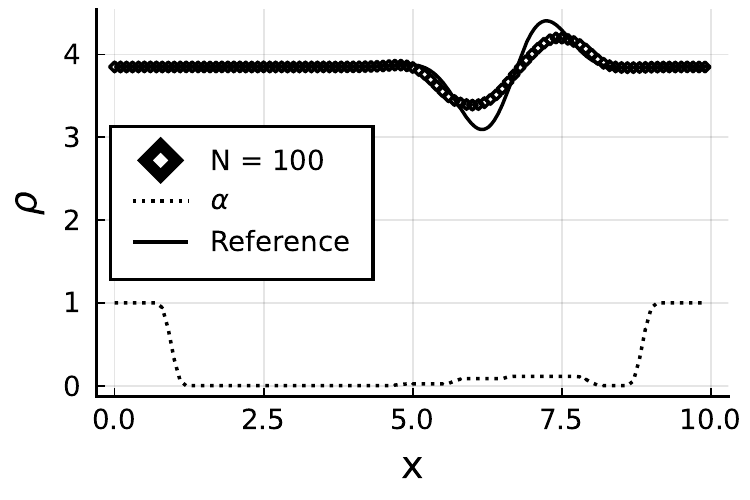}
		\subcaption{Solution for $N = 100$ points. The Entropy inequality predictor (miss) detects an entropy dissipating shock and partially activates the flux to dissipate entropy.}
	\end{subfigure}
	\begin{subfigure}{0.49\textwidth}
		\includegraphics[width=\textwidth]{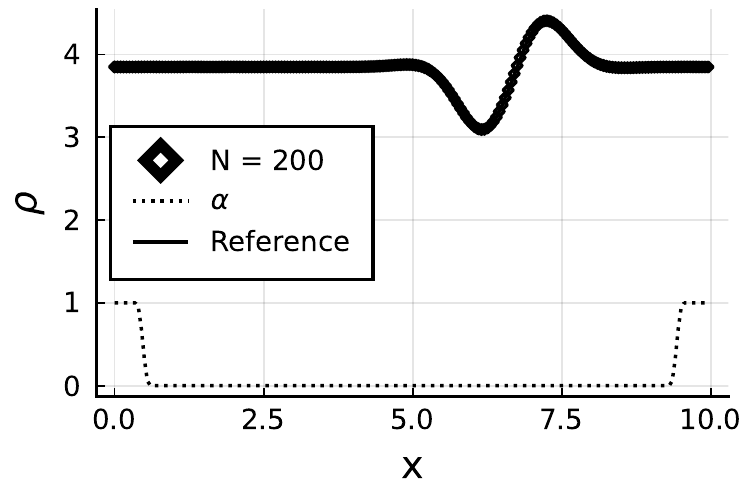}
		\subcaption{Solution for $N = 200$ points. The higher resolution deactivates the entropy inequality predictor and the entropy conservative high order flux reduces the entropy dissipation to zero.}
	\end{subfigure}
	\caption{convergence analysis for the Euler equations. LMRLxFRI Scheme of order 6. The same parameters and fluxes as in figure \ref{fig:shuosh6} were used.}
	\label{fig:convana}
\end{figure}

\section{Conclusion}
\label{sec:concl}

We first looked at some numerical solutions to hyperbolic conservation laws and saw that the entropy inequality is not enough to guarantee high quality solutions. Afterwards a new philosophy for the construction of schemes was proposed as they should satisfy the Dafermos entropy criterion and the entropy equality for smooth solutions. We then constructed such a solver by the hybrid usage of entropy conservative and entropy dissipative fluxes. Numerical experiments showed that having no entropy dissipation for smooth solutions, as motivated by the entropy equality, and enough entropy dissipation in discontinuous areas by the Godunov or respective LxF scheme provides a scheme with improved accuracy in smooth areas over the Godunov scheme and an accuracy not worse than the Godunov or respective LxF scheme in non-smooth areas. This can be seen as an improvement over prior attempts of using the Dafermos criterion for numerical schemes as in \cite[Chapter 9.2]{ranocha2018thesis} where excessive dissipation in smooth areas lead to bad solutions.
The primary difference being that the stencil selector proposed in \cite[Chapter 9.2]{ranocha2018thesis} also tried to dissipate the maximum amount of entropy in smooth areas while in fact the analytic theory in form of the entropy equality dictates the conservation of entropy as the maximum allowable reduction of entropy in this case. Research is ongoing concerning the improvement of stencil selection algorithms in reconstruction based methods by taking into account not only the maximum entropy dissipation but also the entropy equality for smooth areas. The methods that were constructed to calculate the coefficient $\alpha$ could be used also in methods based on steered dissipation as for example in \cite[chapter 11]{glaubitz2020thesis}. Better $\alpha$ distributions could on the other hand greatly enhance the abilities of the constructed schemes. An algorithm based on artificial intelligence has been tested by the author and a preprint concerning several other algorithms to calculate $\alpha$ is available. Practical applications of finite volume methods are mostly often multidimensional problems therefore a future publication concerning this scheme will generalize the presented method to multiple space dimensions on unstructured grids.

\section*{Acknowledgements}
Simon Klein would like to thank Thomas Sonar and Marko Stautz for their support during the preparation of this manuscript. Mr.~Klein would further like to thank the anonymous reviewer for his comments on the first draft of the manuscript, Hendrik Ranocha and Philipp Öffner for interesting discussions on entropy dissipative schemes, leading to an improved presentation of the material. The author was partially supported by the German Science Foundation (DFG) under the Grant SO 363/14-1.

\section{Bibliography}
\bibliographystyle{plainnat}
\bibliography{lit}
\end{document}